\documentclass[hidelinks,onefignum,onetabnum]{siamart251216}

\usepackage{amsmath}
\usepackage{mathtools}
\usepackage{amssymb}
\usepackage{enumitem}
\usepackage{cleveref}
\usepackage{mathrsfs}
\usepackage{blindtext}
\usepackage{graphicx}
\usepackage{subcaption}
\usepackage{multirow}
\usepackage{dsfont}
\usepackage{tikz}
\usepackage{etoolbox}
\usepackage{lipsum}
\usepackage{amsfonts}
\usepackage{graphicx}
\usepackage{epstopdf}
\usepackage{algorithmic}
\ifpdf
  \DeclareGraphicsExtensions{.eps,.pdf,.png,.jpg}
\else
  \DeclareGraphicsExtensions{.eps}
\fi

\newsiamremark{remark}{Remark}
\newsiamremark{problem}{Problem}
\newsiamremark{hypothesis}{Hypothesis}
\crefname{hypothesis}{Hypothesis}{Hypotheses}
\newsiamthm{claim}{Claim}
\newsiamremark{fact}{Fact}
\crefname{fact}{Fact}{Facts}
\newtheorem{assumption}{Assumption}

\headers{Safe Distributed GNE Seeking via Control Barrier Functions}{Y. Meng, W. Li and L. Pavel}

\title{Safe Distributed Generalized Nash Equilibrium Seeking via Control Barrier Functions\thanks{Submitted to the editors August 24, 2026.
\funding{This work was funded by the Natural Sciences and Engineering Research Council (NSERC) of Canada [RGPIN-2024-04958].}}}

\author{Yihan Meng\thanks{Department of Electrical and Computer Engineering, University of Toronto, Toronto, ON M5S 3G4 Canada
  (\email{yihan.meng@mail.utoronto.ca}, \email{pavel@ece.utoronto.ca}).}
\and Weijian Li\thanks{Department of Electrical Engineering, University of Notre Dame, Notre Dame, IN 46556 USA
  (\email{wli26@nd.edu}).}
\and Lacra Pavel\footnotemark[2]}

\usepackage{amsopn}
\DeclareMathOperator{\diag}{diag}
\DeclareMathOperator*{\argmin}{arg\,min}
\DeclareMathOperator*{\subjectto}{s.t.}
\DeclareMathOperator*{\col}{col}

\hypersetup{
  pdftitle={Safe Distributed Generalized Nash Equilibrium Seeking via Control Barrier Functions},
  pdfauthor={Y. Meng, W. Li and L. Pavel}
}

\begin{document}

\maketitle

\begin{abstract}
In this paper, we consider generalized Nash equilibrium (GNE) seeking in non-cooperative games with coupled constraint sets. Specifically, we aim to enforce safety for distributed GNE seeking, whereby the safety specifications are encoded in the coupled constraint set. To achieve this, we introduce the control barrier function (CBF) in the design of the GNE seeking dynamics. We design the dynamics for both full- and partial-information setting, where each player has knowledge of the decision information of all other players or only neighboring players, respectively. We justify the proposed dynamics by showing that the coupled constraint set is forward invariant, the equilibrium of the dynamics coincides with the exact GNE of the game, and the dynamics is asymptotically stable. Furthermore, we extend the approach to games where the agents are multi-integrators. Numerical simulations are provided to verify our results.
\end{abstract}

\begin{keywords}
generalized Nash equilibrium seeking, distributed algorithm, safety, control barrier function, multi-agent systems
\end{keywords}

\begin{MSCcodes}
91A10, 91A43, 91A27, 93A14, 93A16, 93D30
\end{MSCcodes}

\section{Introduction}\label{sec:introduction}
The generalized Nash equilibrium (GNE) problem, first introduced by Debreu \cite{Debreu}, is an extension of the celebrated Nash equilibrium (NE) problem \cite{nash}, 
where the feasible set of each player is affected by the other players. A GNE is a collective decision profile, such that no one in the game can further improve its objective by unilaterally deviating from the current decision.
The GNE seeking problem has garnered increasing interest in recent years \cite{bianchi2021,martyr2021nonzero,pavel2019,romano2022}. 
While reaching a GNE ensures that coupled constraints are satisfied at steady-state, it does not guarantee their satisfaction during the transient state. In many applications, violations of the coupled constraints may jeopardize system safety \cite{hall2025limits,ma2011decentralized,wang2014generalized,wang2025online}. The GNE seeking problem is further complicated when the trajectory of decisions produced by the GNE seeking dynamics needs to be maintained within the coupled constraint set where the safety specifications are encoded. 

The most adopted GNE seeking approach is the primal-dual method and its variants, 
including discrete-time schemes \cite{peng2019}, fully-distributed algorithms \cite{bianchi2021}, passivity-based generalizations \cite{weijian2024}. 
However, the traditional primal-dual dynamics 
makes no safety guarantees. 
In order to enforce safety in GNE seeking, one resorts to interior-point methods. By incorporating the constraints into players' objectives with a log-barrier function, an inexact-penalty approach was proposed in \cite{romano2020}, 
and was later improved by \cite{romano2022} to achieve exact convergence to the GNE. However, the penalty-based method can be inefficient, as the log-barrier shifts the tracked instantaneous equilibrium away from the exact one, the trajectory may transiently diverge. 

On the other hand, control barrier functions (CBFs) have emerged as a prominent approach for safety enforcement \cite{allibhoy2023,allibhoy2025,ames2016,ren2023vector}. 
A CBF-based safe gradient flow for constrained nonlinear programming problems was proposed in \cite{allibhoy2023}, 
and was later extended to solve for variational inequalities \cite{allibhoy2025} 
and distributed optimization with coupled constraints \cite{mestres2023distributed}.
However, since the GNE problem is inherently coupled through both players' objectives and constraint sets, integrating CBFs into distributed GNE seeking dynamics remains a challenge. 

Moreover, we consider safe GNE seeking under more relaxed information structures and with dynamical players. 
Most existing GNE seeking algorithms assume full-information setting where all players' decisions are available to everyone \cite{kannan2012distributed,paccagnan2016distributed,peng2019,zhu2016distributed}. While for distributed systems, a partial-information structure, where players can only access their neighbors' decisions through local communication, is more realistic \cite{gadjov2022exact,koshal2016distributed,lei2022distributed,liu2011stochastic,pavel2019}. 
In addition to information structures, the dynamics governing the players need to be considered for real-world applicability. 
Existing approaches leverage primal-dual methods \cite{liu2023generalized} or penalty-based algorithms \cite{romano2020,romano2022}, but these methods are in short of either safety considerations or computational efficiency.

Motivated by the above observations,
this paper presents a CBF-based GNE seeking dynamics that guarantees safety throughout the entire evolution. 
Specifically, our contributions are threefold:
\begin{enumerate}
    \item 
    We design a safe GNE-seeking dynamics using a CBF-based control synthesis law that restricts the dual variables to an admissible set, thereby guaranteeing safety and preserving asymptotic convergence to the exact GNE. Unlike the single-agent algorithms \cite{allibhoy2023,allibhoy2025}, and distributed optimization in \cite{mestres2023distributed}, our design is applicable to non-cooperative games with coupled constraint sets.
    \item We extend the CBF-based safe GNE seeking dynamics to partial-information settings by incorporating consensus-based estimation to compensate for the lack of global decision information. To the best of our knowledge, this is the first work to address this problem in the literature.
    \item We also consider dynamical players, specifically multi-integrators, and introduce a safe GNE seeking control input design via a CBF-based coordinate transformation that translates the constraint invariance of the decision variables to that of their higher-order derivatives. 
\end{enumerate}

A preliminary version of this work appeared in \cite{Meng2026}. In contrast, this current paper extends the safe GNE seeking dynamics to partial-information settings and presents a controller design method for multi-integrator agents, generalizing the results in \cite{Meng2026}.

The remainder of this paper is organized as follows. \Cref{sec:preliminaries} introduces relevant preliminaries and formulates the GNE seeking problem. \Cref{sec:single-int} proposes a CBF-based distributed algorithm and provides its convergence analysis. \Cref{sec:partialinfo} adapts the design to a partial-information setting. \Cref{sec:multi-int} extends the algorithm to GNE seeking with multi-integrators. \Cref{sec:sim} verifies the proposed approach with simulation examples. \Cref{sec:conclusion} concludes this paper and outlines future research directions.
\section{Preliminaries and game setup}\label{sec:preliminaries}
The following notations are used throughout the paper. The symbols $\mathbb{R}^n$ and $ \mathbb{R}^n_+$ denote the $n$-dimensional real vectors and nonnegative real vectors, respectively. $\mathds{1}_n$ and $0_n$ are the $n$-dimensional vectors with all entries being $1$ and $0$, respectively. $\|\cdot\|$ is the Euclidean norm and $\otimes$ is the Kronecker product. col$\{x_i\}_{i\in\mathcal{I}}$ means to stack all vectors $x_i$, $i\in\mathcal{I}= \{1,2,\cdots,N\}$, into one column vector, and diag$\{X_i\}_{i\in\mathcal{I}}$ is a diagonal matrix with diagonal entries $X_i$, $i\in\mathcal{I}$, where $X_i$ can be numbers or matrices, moreover, diag$\{v\}$ where $v\in\mathbb{R}^n$ gives a $n\times n$ diagonal matrix with diagonal entries being the elements of vector $v$. For a vector $v\in\mathbb{R}^n$, $v\geq0$ means $v$ is nonnegative entrywise, and $v\perp w$ means the vectors $v$ and $w$ are perpendicular to each other, i.e., $v^\top w=0$. For a differentiable function $f:\mathbb{R}^m\to\mathbb{R}^n$, $\frac{\partial f(x)}{\partial x}$ denotes its Jacobian matrix. For $J:\mathbb{R}^n\times\mathbb{R}^m\to\mathbb{R}^p$, $\nabla_x J(x,y)$ denotes the partial gradient of $J$ with respect to $x$. Let $F:\mathbb{R}^n\to\mathbb{R}^n$ and $\mathcal{D}\subset\mathbb{R}^n$, then a variational inequality is a problem of finding $x^*\in\mathcal{D}$ such that $(x-x^*)^\top F(x^*)\geq 0, \forall x\in\mathcal{D}$, and it is referred to with the notation VI$(\mathcal{D},F)$. For a matrix $A$, $\sigma_{\max}(A)$ is the largest eigenvalue of $A$. We denote $[A]_{ij}$ as the entry of matrix $A$ at $i$th row and $j$th column, and $[v]_i$ as the $i$th element of vector $v$. For $x\in\mathbb{R}^n$, $x^{(r)}$ denotes its $r$th derivative.

\subsection{Preliminary concepts}
\noindent

\textit{Graph theory.} The communication among the players is modeled by an undirected graph $\mathcal{G}(\mathcal{I},\mathcal{E},\mathcal{A})$, where $\mathcal{I} = \{1,2,\cdots,N\}$ is the index set of all the nodes, $\mathcal{E}$ is the edge set such that $\mathcal{E} \subseteq \mathcal{I}\times\mathcal{I}$, and $\mathcal{A} = [a_{ij}]\in\mathbb{R}^{N\times N}$ is the adjacency matrix of the graph such that $a_{ij} = a_{ji}$, $a_{ij}>0$ if and only if $(i,j)\in\mathcal{E}$, and $a_{ij}=0$ otherwise. The graph $\mathcal{G}(\mathcal{I},\mathcal{E},\mathcal{A})$ contains no self-loop, i.e., $a_{ii}=0$ for all $i$. The neighbor set of player $i$ is defined as $\mathcal{N}_i = \{j|(i,j)\in\mathcal{E}\}$. The Laplacian matrix is defined as $\mathcal{L} = \text{diag}\{\mathcal{A}\cdot\mathds{1}_N\}-\mathcal{A}$, and by its definition, we have that $\mathcal{L}\cdot\mathds{1}_N=0_N$ and $\mathcal{L} = \mathcal{L}^\top$. Moreover, if the graph is connected, then rank$(\mathcal{L})=N-1$, meaning $\mathds{1}_N$ is the eigenvector of $\mathcal{L}$ corresponding to the simple eigenvalue $0$.

\textit{Control barrier functions.} Consider an affine control system
\begin{equation}\label{cbf-affineSys}
  \dot{x} = f(x)+h(x)u,
\end{equation}
where $f:X\to\mathbb{R}^n$ and $h:X\to\mathbb{R}^{n\times m}$ are locally Lipschitz, $x\in X\subset\mathbb{R}^n$ and $u\in\mathcal{U}\subset\mathbb{R}^m$. A vector control barrier function is defined as follows:
\begin{definition}\cite{allibhoy2023,ren2023vector}\label{definition-vcbf}
  Given a constraint set \( \mathcal{D} \subset X\subset \mathbb{R}^n\). A vector control barrier function (VCBF) is a continuously differentiable function $g:\mathbb{R}^n\to\mathbb{R}^m$ such that:
  \begin{enumerate}
    \item The constraint set can be expressed by 
    \[\mathcal{D} = \{x\in\mathbb{R}^n\mid g(x)\leq 0_m\};\]
    \item There exists a constant $\gamma >0$ such that the admissible set
    \begin{equation}\label{zcbf-soln}
      K_\gamma(x) = \Biggr\{u\in \mathcal{U}\;\left|\; \frac{\partial g(x)}{\partial x}f(x) + \right.\frac{\partial g(x)}{\partial x}h(x) u +\gamma g(x)\leq 0_m \Biggr\}
    \end{equation}
    is non-empty.
  \end{enumerate}
\end{definition}
Then we have the following lemma on the forward invariance of the constraint set $\mathcal{D}$:
\begin{lemma}\cite{allibhoy2023,ren2023vector}\label{lemma-zcbf}
  Consider the system (\ref{cbf-affineSys}) with constraint set $\mathcal{D}$, and let $g$ be a VCBF. Then any feedback controller $u:X\to\mathcal{U}$ satisfying $u(x)\in K_\gamma(x)$ for all $x\in X$ and $(f(x)+h(x)u(x))$ being locally Lipschitz renders $\mathcal{D}$ forward invariant, i.e., if $x(0)\in\mathcal{D}$, then $x(t)\in\mathcal{D}$ for all $t\geq0$.
\end{lemma}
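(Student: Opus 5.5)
The plan is a componentwise comparison argument applied to the closed-loop system. Let $F(x) := f(x)+h(x)u(x)$. By hypothesis $F$ is locally Lipschitz, so for every $x(0)\in X$ there is a unique maximal solution $x(t)$ on some interval $[0,T)$. Fix such a solution with $x(0)\in\mathcal{D}$, and for each $j\in\{1,\dots,m\}$ define the scalar function $\phi_j(t) := [g(x(t))]_j$. Since $g$ is continuously differentiable and $x(\cdot)$ is $C^1$, each $\phi_j$ is $C^1$. By the chain rule,
\[
\dot\phi_j(t) = \Big[\tfrac{\partial g(x)}{\partial x}\big(f(x)+h(x)u(x)\big)\Big]_j\Big|_{x=x(t)}.
\]
The condition $u(x)\in K_\gamma(x)$ in \eqref{zcbf-soln} holds row by row, so we obtain the differential inequality
\[
\dot\phi_j(t)\le -\gamma\,\phi_j(t), \qquad t\in[0,T).
\]

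The next step integrates this inequality. Multiplying by the integrating factor $e^{\gamma t}$ gives $\frac{d}{dt}\big(e^{\gamma t}\phi_j(t)\big)\le 0$. Integrating from $0$ to $t$ yields $\phi_j(t)\le e^{-\gamma t}\phi_j(0)$. Because $x(0)\in\mathcal{D}$, we have $\phi_j(0)\le 0$, and hence $\phi_j(t)\le 0$ for all $t\in[0,T)$ and every $j$. This gives $g(x(t))\le 0_m$, that is, $x(t)\in\mathcal{D}$ on the whole interval of existence. Equivalently, one could invoke the scalar comparison lemma against $\dot y=-\gamma y$, $y(0)=\phi_j(0)$. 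The integrating-factor argument is self-contained and avoids any Lipschitz requirement on the comparison system.

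The main obstacle is concluding "for all $t\ge 0$" rather than only on $[0,T)$. Invariance here means the solution never leaves $\mathcal{D}$ while it exists. I would handle this in one of two ways. The first is to interpret the statement, as is standard for this lemma in \cite{allibhoy2023,ren2023vector}, as invariance along the maximal solution: the argument above shows that no trajectory starting in $\mathcal{D}$ can exit $\mathcal{D}$ in finite time. The second applies when $\mathcal{D}$ is compact, or more generally when trajectories remain bounded. Then $x(t)$ stays in a compact subset of $X$, so finite escape is impossible and the standard extension theorem for ODEs with locally Lipschitz right-hand sides gives $T=\infty$. I would state the proof on the maximal interval and add this remark, noting that in the later applications boundedness comes from the Lyapunov analysis.

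One subtlety is whether $x(t)$ remains in $X$, the domain of $f$, $h$ and $u$. This is automatic while the solution exists, since the solution is defined as a curve in $X$, so it needs no separate treatment.
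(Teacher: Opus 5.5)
Your proof is correct, and it is essentially the standard argument. The paper itself proves nothing here. It imports the lemma from \cite{allibhoy2023,ren2023vector}, and the only mechanism it alludes to is Nagumo's theorem, in the remark after \eqref{dynamics}. That is a tangency condition checked only at boundary points of $\mathcal{D}$.

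Your componentwise comparison route instead uses the stronger hypothesis the lemma actually grants, namely $u(x)\in K_\gamma(x)$ on all of $X$. In return it gains three things.
\begin{enumerate}
\item It needs no description of the tangent cone of $\mathcal{D}$. A Nagumo argument for a general $C^1$ vector $g$ needs a constraint qualification on the active rows, so that the tangent cone equals $\{v : \frac{\partial [g(x)]_j}{\partial x}v\le 0,\ j\text{ active}\}$. In the paper $g$ is affine, so this is harmless there.
\item It gives the quantitative bound $[g(x(t))]_j\le e^{-\gamma t}[g(x(0))]_j$. Hence constraint violations from infeasible initial conditions also decay exponentially.
\item It is purely trajectory-wise. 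It therefore covers directly the paper's uses in which the input depends on additional states, e.g.\ $\lambda=\Lambda(x,z)$ in \cref{lemma-forwardinvariance}, without recasting the lemma on an augmented state space.
\end{enumerate}

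Your caveat about the maximal interval $[0,T)$ is appropriate. In the paper's setting you can get $T=\infty$ for every $\alpha>0$, not only for $\alpha>\theta^2/(4\mu)$ via the Lyapunov bound. The QP \eqref{dynamics-stack-u} has Hessian $G_dG_d^\top+L\succ 0$, so its solution is globally Lipschitz in the linear term. Since $F$ is globally Lipschitz by Assumption~\ref{assumption-pseudo-grad-mapping}, the closed loop is globally Lipschitz in $(x,z)$.
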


\subsection{Game setup}

We consider a generalized game problem with $N$ players indexed by $\mathcal{I} = \{1,2,\cdots,N\}$. Each player $i\in\mathcal{I}$ chooses its decision $x_i\in\mathbb{R}^{n_i}$ to minimize its local objective function $J_i(x_i,x_{-i}):\mathbb{R}^{n_i}\times\mathbb{R}^{n-n_i}\to\mathbb{R}$, dependent both on its own decision $x_i$ and the decisions $x_{-i}$ of the other players, where $x_{-i} = \text{col}\{x_k\}_{k\in\mathcal{I}\backslash\{i\}}$ is the stacked vector of the other $N-1$ players' decision variables and $\sum_{i\in\mathcal{I}}n_i=n$. The choice of player $i$'s decision is confined to a coupled feasible set $X_i(x_{-i})= \{x_i\in\mathbb{R}^{n_i}\mid(x_i,x_{-i})\in\mathcal{X}\}$, where $\mathcal{X} = \{x\in\mathbb{R}^n\mid Gx-H = \sum_{i\in\mathcal{I}}(g_ix_i-h_i)\leq 0_m\}$ is separable and $g_i\in\mathbb{R}^{m\times n_i}$, $h_i\in\mathbb{R}^m$ are known only to player $i$. The resulting game can be formulated as
\begin{equation}\label{problem-formulation}
    \min_{x_i\in \mathbb{R}^{n_i}}\; J_i(x_i,x_{-i}),\quad\subjectto\; (x_i,x_{-i})\in \mathcal{X}.
\end{equation}

We aim to seek a GNE, which is defined as follows:

\begin{definition}\label{definition-gne}
    A GNE is a collective decision $x^* = \col\{x^*_i\}_{i\in\mathcal{I}}$, such that for all $i\in\mathcal{I}$,
    \begin{equation}\label{problem1}
        x_i^*\in\argmin_{x_i\in\mathbb{R}^{n_i}}\; J_i(x_i,x^*_{-i}),\quad\subjectto\; (x_i,x^*_{-i})\in\mathcal{X}.
    \end{equation}
\end{definition}
We make the following standing assumptions:
\begin{assumption}\label{assumption-obj-and-constraint}
\noindent
\begin{enumerate}[label=(\roman*)]
    \item For each $i\in\mathcal{I}$, $J_i$ is continuously differentiable and  convex in $x_i$, given $x_{-i}$;\label{assumption1-i}
    \item The constraint set $\mathcal{X}$ satisfies Slater's condition;\label{assumption1-ii} 
    \item The constraint matrix $G$ is of full row rank.\label{assumption1-iii}
\end{enumerate}
\end{assumption}
\begin{remark}
    We mention that \cref{assumption-obj-and-constraint}\ref{assumption1-i} and \ref{assumption1-ii} are standard assumptions in GNE literature \cite{bianchi2021,facchinei2009,pavel2019}. \cref{assumption-obj-and-constraint}\ref{assumption1-iii} naturally holds in applications \cite{hall2025limits,ma2011decentralized,wang2014generalized}. As a matter of fact, in the context of distributed algorithms with safety considerations, an even stronger condition where each $g_i$ (recall that $G = \begin{bmatrix}
    g_1&\cdots&g_N
    \end{bmatrix}$) is of full row rank is commonly imposed \cite{liu2026achieving,mestres2023distributed,tan2025continuous}.
\end{remark}
Under Assumption \ref{assumption-obj-and-constraint}, we can characterize a GNE with a sufficient and necessary condition, known as the Karush-Kuhn-Tucker (KKT) condition \cite{facchinei2009}:
\begin{equation}\label{KKT}
    \begin{aligned}
        0_{n_i} &= \nabla_{x_i}J_i(x_i^*,x^*_{-i})+g_i^\top\lambda_i^*,\\
        0_m&\leq \lambda_i^*\perp(H-Gx^*)\geq 0_m,
    \end{aligned}
\end{equation}
where $\lambda_i^*\in\mathbb{R}^m$ is the optimal dual variable of the game problem (\ref{problem-formulation}). Let 
$$F(x) = \col\{\nabla_{x_i}J_i(x_i,x_{-i})\}_{i\in\mathcal{I}}$$
be the pseudo-gradient mapping, and $G_d = \diag\{g_i\}_{i\in\mathcal{I}}$, $\lambda^* = \col\{\lambda^*_i\}_{i\in\mathcal{I}}$, then we put the KKT condition (\ref{KKT}) as follows:
\begin{subequations}\label{KKT-vGNE}
    \begin{align}
        0 &= F(x^*)+G_d^\top\lambda^*,\label{KKT-vGNE-primal}\\
        0 &= \lambda^{*\top}[I_N\otimes(Gx^*-H)],\label{KKT-vGNE-complementary}\\
        0 &\geq Gx^*-H,\label{KKT-vGNE-primalfeasible}\\
        0 &\leq \lambda^*.\label{KKT-vGNE-dualfeasible}
    \end{align}
\end{subequations}
We deal with a refinement of GNE, the one that coincides with the solution of VI$(\mathcal{X},F)$, namely, the variational GNE (v-GNE), which requires the optimal dual variables of all players to be the same, i.e., there exists $\underline{\lambda}\in\mathbb{R}^m_+$ such that $\lambda_i^*=\underline{\lambda}$ for all $i\in\mathcal{I}$ \cite{facchinei2009}. 

\begin{remark}
    The concept of v-GNE usually indicates the attainment of a social welfare optimum under a fair scenario. For example, in a peer-to-peer electricity market \cite{Cadre2020}, the dual variable associated with each player is interpreted as the price with which the player trades the resource. The v-GNE of the market implies that all players have reached their optimal benefit under a uniform price system. Many other applications can also be suitably formulated as v-GNE problems, such as autonomous aerial vehicle coordination \cite{Hua2025}, post-disaster humanitarian relief \cite{Nagurney2016}, and transit systems \cite{Zhou2005}, to just name a few.
\end{remark}

For the existence and uniqueness of the v-GNE, we assume the following \cite{bianchi2021,peng2019}:
\begin{assumption}\label{assumption-pseudo-grad-mapping}
    The pseudo-gradient mapping $F(\cdot)$ has the following properties:
    \begin{enumerate}[label=(\roman*)]
        \item $\mu$-strongly monotone: there exists $\mu>0$ such that 
        $$(F(x) - F(y))^\top(x - y)\geq \mu\|x-y\|^2,\quad\forall x,y\in\mathbb{R}^n;$$
        \item $\theta$-Lipschitz continuous: there exists $\theta>0$ such that
        \[\|F(x)-F(y)\|\leq \theta\|x-y\|,\quad\forall x,y\in\mathbb{R}^n.\]
    \end{enumerate}
\end{assumption}

The objective of this paper is to design a safe GNE seeking dynamics or control input, such that the decision variables of the players converge to the exact v-GNE of the game, and the evolution of the dynamics is maintained within the constraint set $\mathcal{X}$. The design problem is formulated as follows:
\begin{problem}\label{remark-three-objectives}
    Design a dynamics or control input that satisfies the following:
    \begin{enumerate}[label=(\text{O\arabic*})]
        \item The decision variable $x$ remains within the constraint set $\mathcal{X}$ throughout the evolution of the dynamics, hence $\mathcal{X}$ is forward invariant;\label{objective-sf}
        \item The equilibrium of the dynamics recovers the KKT conditions \eqref{KKT-vGNE} with all players' dual variables being identical, hence is a v-GNE of the game problem \cref{problem-formulation};\label{objective-eq}
        \item The dynamics asymptotically converges to its equilibrium.\label{objective-as}
    \end{enumerate}
\end{problem}
\section{Control barrier function-based GNE seeking}\label{sec:single-int}

In this section, we consider the single-integrator dynamics
\[\dot{x}_i = u_i,\quad \forall i\in\mathcal{I}.\]
The goal is to design $u_i$, such that the objectives in \cref{remark-three-objectives} are satisfied.

\subsection{Distributed algorithm design}\label{sec:single-int-design}

We first adopt a full-decision information setting where each player has full information of the others' decisions $x_{-i}$, so that each player can accurately evaluate the partial derivative $\nabla_{x_i}J_i(x_i,x_{-i})$. Besides, each player can access the local information $J_i$, $g_i$, $h_i$. In order for the dual variables $\lambda_i\in\mathbb{R}^m$ to reach consensus, we allow information exchange among players through a communication network $\mathcal{G}(\mathcal{I},\mathcal{E},\mathcal{A})$, and we introduce an auxiliary variable $z_i\in\mathbb{R}^{m}$ to enforce the consensus. Each player $i$ communicates with neighbor $j\in\mathcal{N}_i$ the local information $z_i$ and $\lambda_i$, where the underlying communication network satisfies the following assumption:
\begin{assumption}\label{assumption-network}
    The communication network $\mathcal{G}(\mathcal{I},\mathcal{E},\mathcal{A})$ is undirected and connected.
\end{assumption}
We propose the following dynamics for each player $i\in\mathcal{I}$:
\begin{subequations}\label{dynamics}
    \begin{align}
        \dot{x}_i &= -\nabla_{x_i}J_i(x_i,x_{-i}) -g_i^\top \lambda_i,\label{dynamics-xi}\\
        \dot{z}_i &= \sum_{j\in\mathcal{N}_i}a_{ij}(\lambda_i-\lambda_j),\label{dynamics-zi}\\
        \lambda_i&= \begin{aligned}[t]
            \argmin_{\lambda_i\geq0}\;&\frac{1}{2}\left\|g_i^\top \lambda_i\right\|^2+\lambda_i^\top\Biggl[g_i\nabla_{x_i} J_i(x_i,x_{-i})\\&\left.-\alpha (g_ix_i-h_i)+\sum_{j\in\mathcal{N}_i}a_{ij}(z_i-z_j)+\frac{1}{2}\sum_{j\in\mathcal{N}_i}a_{ij}(\lambda_i-2\lambda_j)\right],
        \end{aligned}\label{dynamics-u}
    \end{align}
\end{subequations}
where $\alpha>0$ is a constant parameter. 
\begin{remark}
Dynamics \cref{dynamics} is based on primal-dual dynamics. We design (\ref{dynamics-xi}) as a primal gradient descent of the following Lagrangian:
\[\mathscr{L}_i(x_i,\lambda_i) = J_i(x_i,x_{-i}) + \lambda_i^\top\sum_{i\in\mathcal{I}}(g_ix_i-h_i).\]
To enforce the identical dual variable requirement from the v-GNE definition, we integrate the consensus error of $\lambda_i$ as in \eqref{dynamics-zi}, whose asymptotic stability about the origin ensures consensus of the dual variables. By treating \cref{dynamics-xi} as an affine control system with $\lambda_i$ being the control input and $\lambda_i=0$ as a nominal control, we design (\ref{dynamics-u}) as a quadratic programming (QP)-based synthesis law that minimizes the control deviation from the nominal input, while restricting the dual variables within the following CBF-based admissible set:
\begin{equation}\label{admissible-set}
    K_\alpha(x) = \left\{\lambda\in\mathbb{R}^{Nm}_+|\right.G\left(-F(x)-G_d^\top \lambda\right)\leq -\alpha (Gx-H)\},
\end{equation}
where $\lambda = \col\{\lambda_i\}_{i\in\mathcal{I}}$ is the collective dual variable. The design of the admissible set $K_\alpha(x)$ is motivated by Nagumo's theorem \cite{khalil2002}, which states that for a set to be invariant, the vector field at any boundary point must direct inside or be tangent to the set. In our case, for the set $\mathcal{X}$ to be invariant, this requires the inequality $G(-F(x) - G_d^\top\lambda)\leq 0$ for all $x$ such that $Gx=H$. Whereas the set $K_\alpha(x)$ generalizes the notion to all points inside or on the boundary of the constraint set, by perturbing the right hand side of the inequality with the term $-\alpha(Gx-H)$ indicating the satisfaction of the constraint, where the parameter $\alpha>0$ adjusts the conservativeness of the perturbation. By \cref{lemma-zcbf}, as long as $\lambda\in K_\alpha(x)$, the forward invariance of the constraint set $\mathcal{X}$ would be guaranteed. 
\end{remark}

To simplify the analysis of the dynamics and ensure the uniqueness of decision trajectories, we provide a compact representation of \cref{dynamics} and establish its local Lipschitzness in the following lemma:
\begin{lemma}\label{lemma-compact-lipschitz}
    Dynamics \cref{dynamics} can be written in the following compact form:
    \begin{subequations}\label{dynamics-stack}
    \begin{align}
        \dot{x} =& -F(x)-G_d^\top \lambda,\label{dynamics-stack-x}\\
        \dot{z} =& L\lambda,\label{dynamics-stack-z}\\
        \lambda =& \argmin_{\lambda\geq0}\;\frac{1}{2}\|G_d^\top \lambda\|^2+\lambda^\top\left[G_dF(x)\right.-\alpha(G_dx-H_s)+Lz]+\frac{1}{2}\lambda^\top L\lambda,\label{dynamics-stack-u}
    \end{align}
\end{subequations}
where $z = \col\{z_i\}_{i\in\mathcal{I}}$. Moreover, under \Cref{assumption-obj-and-constraint}, the right-hand side of \cref{dynamics-stack} is locally Lipschitz with respect to $(x,z)$.
\end{lemma}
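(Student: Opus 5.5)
The plan has two parts: first show that the $N$ coupled per-player QPs in \cref{dynamics-u} are jointly equivalent to the single stacked QP \cref{dynamics-stack-u}, and then show that the solution of that QP depends Lipschitz-continuously on its linear term. Throughout, set $x=\col\{x_i\}$, $z=\col\{z_i\}$, $\lambda=\col\{\lambda_i\}$, $H_s=\col\{h_i\}_{i\in\mathcal{I}}$, and interpret $L$ as $\mathcal{L}\otimes I_m$.

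\textbf{Step 1 (stacking the $x$ and $z$ equations).} Equations \cref{dynamics-stack-x} and \cref{dynamics-stack-z} follow directly. The block-diagonal $G_d$ collects the terms $g_i^\top\lambda_i$, and $[L\lambda]_i=\sum_{j\in\mathcal{N}_i}a_{ij}(\lambda_i-\lambda_j)$.

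\textbf{Step 2 (the $\lambda$ equation).} The objective in \cref{dynamics-u} is not simply block $i$ of \cref{dynamics-stack-u}, since summing the cross terms over $i$ does not reproduce $\frac12\lambda^\top L\lambda$. I will therefore compare optimality conditions rather than objectives.

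1. Player $i$ minimizes over $\lambda_i\ge0$ with the neighbours' $\lambda_j$ held fixed. The gradient of its objective in $\lambda_i$ is
\[
g_ig_i^\top\lambda_i+g_i\nabla_{x_i}J_i-\alpha(g_ix_i-h_i)+[Lz]_i+\textstyle\sum_j a_{ij}\lambda_i-\sum_j a_{ij}\lambda_j .
\]
The last two terms equal $[L\lambda]_i$, so this expression is exactly block $i$ of $G_dG_d^\top\lambda+b(x,z)+L\lambda$, where $b(x,z)=G_dF(x)-\alpha(G_dx-H_s)+Lz$.
2. Each player's problem is convex in $\lambda_i$, since its Hessian is $g_ig_i^\top+\big(\sum_j a_{ij}\big)I_m\succeq 0$. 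Hence its KKT conditions are $0\le\lambda_i\perp(\text{block }i\text{ of gradient})\ge0$.
3. Stacking these conditions over $i$ gives precisely the KKT conditions of \cref{dynamics-stack-u}, namely
\[
0\le\lambda\perp\big[(G_dG_d^\top+L)\lambda+b(x,z)\big]\ge0 .
\]

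To make the two formulations genuinely equivalent, I will show that $Q:=G_dG_d^\top+L\succ0$. Take $v=\col\{v_i\}$ with $v^\top Qv=0$. Since both summands are positive semidefinite, this forces $g_i^\top v_i=0$ for all $i$ and $v^\top Lv=0$. Under \Cref{assumption-network} the latter gives $v_i=\bar v$ for all $i$. Then $G^\top\bar v=0$, and \Cref{assumption-obj-and-constraint}\ref{assumption1-iii} gives $\bar v=0$.

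Consequently \cref{dynamics-stack-u} is a strongly convex QP with a unique solution, which is the unique joint solution of the player-wise conditions. This makes $\lambda$ in \cref{dynamics} well defined.

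\textbf{Step 3 (local Lipschitzness).} Let $\lambda^a,\lambda^b$ be the QP solutions for linear terms $b^a,b^b$. Write the KKT conditions as variational inequalities over $\mathbb{R}^{Nm}_+$, $(\lambda'-\lambda^a)^\top(Q\lambda^a+b^a)\ge0$ and similarly for $\lambda^b$. Plug $\lambda^b$ into the first and $\lambda^a$ into the second, then add the two inequalities. This gives
\[
\sigma_{\min}(Q)\|\lambda^a-\lambda^b\|^2\le (\lambda^a-\lambda^b)^\top(b^b-b^a),
\]
so $\|\lambda^a-\lambda^b\|\le\|b^a-b^b\|/\sigma_{\min}(Q)$.

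The map $(x,z)\mapsto b(x,z)$ is locally Lipschitz provided $F$ is. Combining this with the bound above shows that $\lambda(x,z)$ is locally Lipschitz. The right-hand sides $-F(x)-G_d^\top\lambda(x,z)$ and $L\lambda(x,z)$ are then compositions of locally Lipschitz maps, and hence locally Lipschitz.

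\textbf{Main obstacle.} Step 2 is the delicate point: one must recognise that the per-player problems agree with the stacked QP only through their optimality conditions, and that uniqueness requires $Q\succ0$. This uses both connectivity and the full row rank of $G$. A secondary issue is that \Cref{assumption-obj-and-constraint} by itself only makes $F$ continuous. For local Lipschitzness of $F$ I would invoke \Cref{assumption-pseudo-grad-mapping}(ii), or assume the $J_i$ are $C^2$.
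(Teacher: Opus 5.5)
Your proposal is correct, and its skeleton matches the paper's proof:
\begin{itemize}
\item stack the $x$- and $z$-equations;
\item identify the player-wise KKT systems of the $\lambda_i$-problems with the KKT system of the stacked QP (matching optimality conditions rather than objectives, exactly as the paper does);
\item show that the quadratic term $G_dG_d^\top+L$ is positive definite using connectivity together with full row rank of $G$.
\end{itemize}

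The genuine difference is in the Lipschitz step. The paper cites an external parametric-QP result (Theorem 2.1 of the Coroianu et al.\ reference) to conclude that the solution map $\Lambda(x,z)$ is locally Lipschitz. You instead derive it directly from the variational-inequality form of optimality and strong monotonicity of $\lambda\mapsto Q\lambda+b$. This gives the explicit estimate $\|\lambda^a-\lambda^b\|\le\|b^a-b^b\|/\sigma_{\min}(Q)$. Your argument is self-contained and gives a constant. It also shows the solution map is globally Lipschitz in the linear term, so $\Lambda$ is globally Lipschitz in $(x,z)$ whenever $F$ is. The citation is shorter but opaque.

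You also make explicit two points the paper glosses over:
\begin{itemize}
\item convexity of each player's $\lambda_i$-problem, which is what makes its KKT conditions sufficient (the paper invokes Slater's condition, which concerns necessity);
\item uniqueness of the joint solution of the coupled argmins defining $\lambda$ in the original dynamics.
\end{itemize}

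Your caveat about hypotheses is well founded. The paper's proof uses connectivity (via $\ker\mathcal{L}$ being spanned by the all-ones vector) even though the lemma lists only Assumption 1. Moreover, continuous differentiability of the $J_i$ does not make $F$ locally Lipschitz. The stated conclusion therefore does require Assumption 2(ii), or $C^2$ objectives, as you propose.
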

\begin{proof}
    It is straightforward to rewrite \cref{dynamics-xi} and \cref{dynamics-zi} in the compact form of \cref{dynamics-stack-x} and \cref{dynamics-stack-z}. For (\ref{dynamics-u}), we notice that it is essentially a QP problem, and its constraint $\lambda_i\geq 0$ satisfies Slater's condition, therefore, the following KKT condition for (\ref{dynamics-u}) implies its optimality:
\begin{subequations}\label{u-KKT}
    \begin{align}
        \begin{split}
            g_ig_i^\top \lambda_i+g_i\nabla_{x_i}J_i(x_i,x_{-i})-\alpha (g_ix_i-h_i)\quad\quad\\
            +\sum_{j\in\mathcal{N}_i}a_{ij}(z_i-z_j)+\sum_{j\in\mathcal{N}_i}a_{ij}(\lambda_i-\lambda_j)- w_i &= 0,
        \end{split}\label{u-KKT-1}\\
        w_i^\top \lambda_i &= 0,\label{u-KKT-2}\\
        w_i &\geq 0,\label{u-KKT-3}\\
        \lambda_i &\geq 0.
    \end{align}
\end{subequations}
where $w_i\in\mathbb{R}^{m}$ is the dual variable of problem (\ref{dynamics-u}). For notational conciseness, we let $w = \col\{w_i\}_{i\in\mathcal{I}}$, $H_s = \col\{h_i\}_{i\in\mathcal{I}}$, $L = \mathcal{L}\otimes I_m$, and $G_d$ as defined in (\ref{KKT-vGNE-primal}). Then (\ref{u-KKT}) can be written in the following compact form:
\begin{subequations}\label{u-KKT-stack}
    \begin{align}
        G_dG_d^\top \lambda + G_dF(x)-\alpha(G_dx-H_s)+Lz+L\lambda-w&=0,\label{u-KKT-stack-1}\\
        w^\top \lambda &=0,\label{u-KKT-stack-2}\\
        w&\geq0,\label{u-KKT-stack-3}\\
        \lambda&\geq0.
    \end{align}
\end{subequations}
Notice that the equivalence of (\ref{u-KKT-2}) and (\ref{u-KKT-stack-2}) follows from the fact that for non-negative vectors $w_i$ and $\lambda_i$, $w^\top \lambda = 0$ if and only if $w_i^\top \lambda_i=0$ for all $i$. Moreover, we observe that (\ref{u-KKT-stack}) is in fact the KKT condition for \cref{dynamics-stack-u}. Hence, dynamics \cref{dynamics} can be put in the compact representation of \cref{dynamics-stack}. 

To show the Lipschitzness of \cref{dynamics-stack}, we first notice that \cref{dynamics-stack-u} is a QP with positive definite quadratic term and admits a unique and finite solution, since $\ker(\mathcal{L}) = \text{span}(\mathds{1}_N)$, then if $\|G_d^\top\lambda\|^2 = 0$ and $\lambda^\top L\lambda = 0$ with the same nonzero $\lambda$, there exists $\lambda'\in\mathbb{R}^m$ such that $\lambda_i = \lambda'$ and $g_i^\top \lambda' = 0,\;\forall i$, which contradicts the full-row-rank condition of $G$ in \cref{assumption-obj-and-constraint}. Consider the solution to (\ref{dynamics-stack-u}) as a function $\Lambda(x,z):\mathbb{R}^n\times \mathbb{R}^{Nm}\to\mathbb{R}^{Nm}$ of $x$ and $z$, then by \cite[Theorem 2.1]{coroianu2016}, we have that $\Lambda(x,z)$ is locally Lipschitz with respect to $(x,z)$, which further implies the local Lipschitzness of the dynamics (\ref{dynamics-stack}). 
\end{proof}

\begin{remark}
    Note that decoupling the game problem \cref{problem-formulation} via the notion of constraint mismatch variable as in \cite{mestres2023distributed,tan2025continuous} would be difficult here, as the objectives of the players are interdependent. 
    Furthermore, the feasibility of the dual variable synthesis and the Lipschitzness of the closed-loop dynamics would hold if every $g_i$ is of full row rank \cite[Lemma 4.5]{mestres2023distributed}. In contrast, these properties are guaranteed in our design under the milder condition that the constraint matrix $G$ is of full row rank, i.e., \cref{assumption-obj-and-constraint}(iii).

\end{remark}

\subsection{Convergence analysis}\label{sec:full-info-convergence}
In this subsection, we analyze the convergence of dynamics \cref{dynamics-stack}. Specifically, we demonstrate that the objectives \ref{objective-sf} -- \ref{objective-as} are satisfied. First, we show the forward invariance of the constraint set $\mathcal{X}$, which is \ref{objective-sf}.
\begin{lemma}\label{lemma-forwardinvariance}
    Consider dynamics \cref{dynamics-stack}. Under \cref{assumption-network}, for any feasible initial condition $x(0)\in\mathcal{X}$ and $\forall \alpha>0$, the variable $\lambda$ satisfies $\lambda\in K_\alpha(x)$, and thus, the feasible set $\mathcal{X}$ is forward invariant.
\end{lemma}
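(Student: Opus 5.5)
The plan is to read the admissible-set inequality off the optimality conditions of the QP \cref{dynamics-stack-u}. Then \cref{lemma-zcbf} gives forward invariance, applied to the augmented state $(x,z)$ because $\lambda$ depends on $z$ as well as $x$.

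\textbf{Step 1 (membership $\lambda\in K_\alpha(x)$).} By the proof of \cref{lemma-compact-lipschitz}, for every $(x,z)$ the QP \cref{dynamics-stack-u} has a unique solution $\lambda=\Lambda(x,z)$. It is characterized by the KKT system \cref{u-KKT-stack} with some multiplier $w\geq 0$. I will left-multiply \cref{u-KKT-stack-1} by $\mathds{1}_N^\top\otimes I_m$ and use four identities:
\begin{itemize}
\item $(\mathds{1}_N^\top\otimes I_m)G_d=G$, since $G_d=\diag\{g_i\}$ and $G=[g_1\ \cdots\ g_N]$;
\item $(\mathds{1}_N^\top\otimes I_m)H_s=H$;
\item $(\mathds{1}_N^\top\otimes I_m)L=(\mathds{1}_N^\top\mathcal{L})\otimes I_m=0$, by symmetry of $\mathcal{L}$ and $\mathcal{L}\mathds{1}_N=0$;
\item $(\mathds{1}_N^\top\otimes I_m)w=\sum_i w_i\ge 0$.
\end{itemize}
Both Laplacian terms $Lz$ and $L\lambda$ therefore vanish, and we get
\[
G G_d^\top\lambda+GF(x)-\alpha(Gx-H)=\sum_{i\in\mathcal{I}}w_i\geq 0 .
\]
This is exactly $G(-F(x)-G_d^\top\lambda)\le-\alpha(Gx-H)$. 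Together with $\lambda\ge0$ from the QP constraint, it gives $\lambda\in K_\alpha(x)$. The argument holds for every $(x,z)$, not only for $x\in\mathcal{X}$, and for every $\alpha>0$. Connectivity (\cref{assumption-network}) is not needed for this identity itself. It enters through well-posedness, since uniqueness of the QP solution in \cref{lemma-compact-lipschitz} uses $\ker\mathcal{L}=\mathrm{span}(\mathds{1}_N)$.

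\textbf{Step 2 (VCBF and invariance).} Take $g(x)=Gx-H$, so that $\mathcal{X}=\{x\mid g(x)\le0\}$ and $\frac{\partial g}{\partial x}=G$. Regard \cref{dynamics-stack-x} as the affine system \cref{cbf-affineSys} with $f(x)=-F(x)$, $h(x)=-G_d^\top$ and input $u=\lambda\in\mathbb{R}^{Nm}_+$. Step 1 shows that $K_\alpha(x)$ is nonempty for every $x$, because it contains $\Lambda(x,z)$ for any $z$. Hence $g$ is a VCBF with $\gamma=\alpha$.

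\textbf{Step 3 (the expected obstacle).} The feedback $\Lambda(x,z)$ depends on $z$, whereas \cref{lemma-zcbf} is stated for feedback laws $u(x)$. I would handle this with the augmented system on the state $(x,z)$:
\begin{itemize}
\item take the drift $(-F(x),0)$ and input matrices $(-G_d^\top, L)$, with barrier $\tilde g(x,z)=Gx-H$ and constraint set $\mathcal{X}\times\mathbb{R}^{Nm}$;
\item $\partial\tilde g/\partial(x,z)=[G\ \ 0]$, so the $z$-component is irrelevant to the barrier condition;
\item the admissible set for the augmented system is again defined by the inequality verified in Step 1.
\end{itemize}
\cref{lemma-compact-lipschitz} gives local Lipschitzness of the closed-loop vector field in $(x,z)$. \cref{lemma-zcbf} applied to the augmented system then yields that $x(0)\in\mathcal{X}$ implies $x(t)\in\mathcal{X}$ for all $t\ge0$.

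If this augmentation turns out to be awkward, a fallback is a direct comparison argument. Since $\frac{d}{dt}(Gx-H)=G\dot x\le-\alpha(Gx-H)$ componentwise along solutions, the comparison lemma gives $[Gx(t)-H]_k\le e^{-\alpha t}[Gx(0)-H]_k\le0$ for each component $k$.
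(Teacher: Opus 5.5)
Your proposal is correct and follows the paper's own proof: you sum the KKT stationarity condition \cref{u-KKT-stack-1} over players by left-multiplying with $\mathds{1}_N^\top\otimes I_m$ and use $w\geq 0$ to obtain $\lambda\in K_\alpha(x)$, then invoke \cref{lemma-zcbf} with the Lipschitzness from \cref{lemma-compact-lipschitz}. You also make explicit two points the paper leaves implicit: the annihilation $(\mathds{1}_N^\top\otimes I_m)L=0$, and the fact that the feedback depends on $z$ as well as $x$, which you handle either with the augmented state or with the elementary bound $Gx(t)-H\leq e^{-\alpha t}(Gx(0)-H)$.
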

\begin{proof}
    Consider the admissible set (\ref{admissible-set}) and notice that the KKT condition (\ref{u-KKT-stack-1}) and (\ref{u-KKT-stack-3}) imply
    $$G_dG_d^\top \lambda + G_dF(x)-\alpha(G_dx-H_s)+Lz+L\lambda\geq0.$$
    Since $G_d = \diag\{g_i\}_{i\in\mathcal{I}}$ and $H_s = \col\{h_i\}_{i\in\mathcal{I}}$, it holds that $G = (\mathds{1}_N^\top\otimes I_{m})G_d$ and $H = (\mathds{1}_N^\top\otimes I_{m})H_s$. Left-multiplying the above inequality with $\mathds{1}_N^\top\otimes I_{m}$ gives
    $$GG_d^\top \lambda+GF(x)-\alpha(Gx-H)\geq0,$$
    which means $\lambda\in K_\alpha(x)$. Moreover, by \cref{lemma-compact-lipschitz}, the right-hand side of dynamics (\ref{dynamics-stack-x})-(\ref{dynamics-stack-z}) is locally Lipschitz. Therefore, by \cref{lemma-zcbf}, the feasible set $\mathcal{X}$ is forward invariant under (\ref{dynamics-stack}).
\end{proof}
\begin{remark}
    To enforce invariance of the constraint set $\mathcal{X}$ by CBF, \cref{definition-vcbf} requires that the admissible set $K_\alpha(x)$ need to be non-empty. \cref{lemma-forwardinvariance} ensures this by showing that the QP \cref{dynamics-stack-u} admits a solution and the solution stays within $K_\alpha(x)$.
\end{remark}

Next, we verify that the equilibrium of (\ref{dynamics-stack}) is exactly the v-GNE of the game problem (\ref{problem-formulation}), which shows \ref{objective-eq}.
\begin{lemma}\label{lemma-vgneeq}
    Under \cref{assumption-network}, the equilibrium $(\bar{x},\bar{z})$ of dynamics (\ref{dynamics-stack}) along with the corresponding dual variable $\bar{\lambda} = \Lambda(\bar{x},\bar{z})$ solves the KKT conditions (\ref{KKT-vGNE}), and $\bar{\lambda} = \mathds{1}_N\otimes \underline{\lambda}$, for some $\underline{\lambda}\in\mathbb{R}^m_+$. Hence, $\bar{x}$ is a v-GNE of the game problem (\ref{problem-formulation}).
\end{lemma}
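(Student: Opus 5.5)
At an equilibrium $(\bar x,\bar z)$ we have $\dot x=0$ and $\dot z=0$. I would read off the four KKT conditions (\ref{KKT-vGNE}) directly from these two identities together with the QP optimality conditions (\ref{u-KKT-stack}), which hold at $\bar\lambda=\Lambda(\bar x,\bar z)$ with some multiplier $\bar w\ge 0$.

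\textbf{Consensus and dual feasibility.} From $\dot z=0$, (\ref{dynamics-stack-z}) gives $L\bar\lambda=0$. By \cref{assumption-network}, $\ker(\mathcal{L})=\operatorname{span}(\mathds{1}_N)$, so $\ker L=\{\mathds{1}_N\otimes v: v\in\mathbb{R}^m\}$. Hence $\bar\lambda=\mathds{1}_N\otimes\underline{\lambda}$ for some $\underline{\lambda}\in\mathbb{R}^m$. Since $\bar\lambda\ge 0$ from the QP constraint, $\underline{\lambda}\in\mathbb{R}^m_+$, which gives (\ref{KKT-vGNE-dualfeasible}).

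\textbf{Stationarity.} From $\dot x=0$, (\ref{dynamics-stack-x}) gives $F(\bar x)+G_d^\top\bar\lambda=0$, which is (\ref{KKT-vGNE-primal}).

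\textbf{Feasibility and complementarity.} Substitute into (\ref{u-KKT-stack-1}). Stationarity gives $G_dF(\bar x)=-G_dG_d^\top\bar\lambda$, which cancels the quadratic term, and $L\bar\lambda=0$. What remains is
\[
-\alpha(G_d\bar x-H_s)+L\bar z=\bar w .
\]
Left-multiply by $\mathds{1}_N^\top\otimes I_m$, using $(\mathds{1}_N^\top\otimes I_m)L=0$ together with $G=(\mathds{1}_N^\top\otimes I_m)G_d$ and $H=(\mathds{1}_N^\top\otimes I_m)H_s$. This yields
\[
-\alpha(G\bar x-H)=\textstyle\sum_i \bar w_i\ge 0,
\]
so $G\bar x\le H$, which is (\ref{KKT-vGNE-primalfeasible}).

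For complementarity, take the inner product of the displayed identity with $\bar\lambda=\mathds{1}_N\otimes\underline\lambda$. The left side gives
\[
-\alpha\,\underline\lambda^\top(G\bar x-H)+\bar z^\top L\bar\lambda=-\alpha\,\underline\lambda^\top(G\bar x-H),
\]
using symmetry of $L$. The right side is $\bar w^\top\bar\lambda=0$ by (\ref{u-KKT-stack-2}). Since $\alpha>0$, we get $\underline\lambda^\top(G\bar x-H)=0$. This is exactly $\bar\lambda^\top[I_N\otimes(G\bar x-H)]=\sum_i\underline\lambda^\top(G\bar x-H)\cdot\frac{1}{N}\cdot N=0$, i.e.\ (\ref{KKT-vGNE-complementary}).

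\textbf{Conclusion.} All of (\ref{KKT-vGNE}) hold with identical multipliers $\underline\lambda$. Under \cref{assumption-obj-and-constraint}, these KKT conditions are necessary and sufficient, and identical multipliers characterize a v-GNE. Hence $\bar x$ is a v-GNE.

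The only delicate step is the complementarity argument. It relies on the $L\bar z$ term vanishing, which happens only after projecting onto the consensus direction, either by summing over agents or by pairing with $\bar\lambda\in\ker L$. Feasibility follows from $\bar w\ge0$ only after that same projection, not agent by agent.
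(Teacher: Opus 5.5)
Your proof is correct. It follows the paper's argument for stationarity, consensus, dual feasibility and complementary slackness; in particular, your complementarity step pairs the reduced QP stationarity condition with $\bar\lambda\in\ker L$ to eliminate $L\bar z$, exactly as the paper does.

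The one genuine difference is primal feasibility. The paper gets $G\bar x\le H$ by appealing to the forward invariance of $\mathcal{X}$ from \cref{lemma-forwardinvariance}. You instead derive it algebraically: you project the reduced identity $-\alpha(G_d\bar x-H_s)+L\bar z=\bar w$ onto the consensus direction with $\mathds{1}_N^\top\otimes I_m$ and use $\bar w\ge 0$. In effect, you observe that $\bar\lambda\in K_\alpha(\bar x)$ together with $\dot x=0$ forces $0\le-\alpha(G\bar x-H)$.

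Your route is self-contained and shows that every equilibrium of \cref{dynamics-stack} is feasible, independently of any initial condition. Forward invariance alone only concerns trajectories that start in $\mathcal{X}$. It therefore certifies feasibility of $\bar x$ only if one also knows that $\bar x$ lies in $\mathcal{X}$, or is the limit of a feasible trajectory and $\mathcal{X}$ is closed. The paper's route is shorter, given the preceding lemma, but leaves this step implicit.

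There is a minor notational slip. (\ref{KKT-vGNE-complementary}) is a $1\times N$ row vector whose $i$th entry is $\bar\lambda_i^\top(G\bar x-H)$. The final step should simply say that each entry equals $\underline\lambda^\top(G\bar x-H)=0$; drop the expression with $\sum_i$ and $\frac{1}{N}\cdot N$.
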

\begin{proof}
    The equilibrium point $(\bar{x},\bar{z})$ of (\ref{dynamics-stack}) satisfies the following:
    \begin{subequations}\label{equilibrium condition}
        \begin{align}
            F(\bar{x})+G_d^\top \bar{\lambda}&=0,\label{equilibrium-1}\\
            L\bar{\lambda}&=0,\label{equilibrium-2}\\
            \text{KKT for (\ref{dynamics-stack-u}) at }&(\bar{x},\bar{z}).\label{equilibrium-3}
        \end{align}
    \end{subequations}
    Trivially, (\ref{equilibrium-1}) implies (\ref{KKT-vGNE-primal}), the constraint of (\ref{dynamics-stack-u}) implies (\ref{KKT-vGNE-dualfeasible}), (\ref{equilibrium-2}) under the assumption that the communication network is connected implies that $\bar{\lambda} = \mathds{1}_N\otimes \underline{\lambda}$, for some $\underline{\lambda}\in\mathbb{R}^m_+$, and (\ref{KKT-vGNE-primalfeasible}) is guaranteed by the forward invariance of $\mathcal{X}$ established in \cref{lemma-forwardinvariance}. Next we show that (\ref{equilibrium-3}) implies (\ref{KKT-vGNE-complementary}). We write out (\ref{equilibrium-3}) as follows:
    \begin{subequations}
        \begin{align}
            G_dG_d^\top \bar{\lambda} + G_dF(\bar{x})-\alpha(G_d\bar{x}-H_s)+L\bar{z}+L\bar{\lambda}-\bar{w}&=0,\label{u-KKT-equilibrium-1}\\
            \bar{w}^\top \bar{\lambda} &=0,\\
            \bar{w}&\geq0,\\
            \bar{\lambda}&\geq0.
        \end{align}
    \end{subequations}
    By (\ref{equilibrium-1}) and \cref{assumption-network}, the first two equations above reduce to
    \[-\alpha\bar{\lambda}^\top(G_d\bar{x}-H_s)=0,\]
    and further
    $$\underline{\lambda}^{\top}(\mathds{1}_N^\top \otimes I_m)(G_d\bar{x} - H_s) = \underline{\lambda}^{\top}(G\bar{x}-H)=0,$$
    which implies (\ref{KKT-vGNE-complementary}). Therefore, the equilibrium condition (\ref{equilibrium condition}) solves the KKT condition (\ref{KKT-vGNE}), and the equilibrium of (\ref{dynamics-stack}) is a v-GNE of the game problem (\ref{problem-formulation}).
\end{proof}
Having established \ref{objective-sf} and \ref{objective-eq}, we prove the asymptotic stability of \cref{dynamics-stack}, i.e., \ref{objective-as}, thereby completing our analysis as summarized in the following theorem:
\begin{theorem}\label{theorem-singleintegrator}
    Consider dynamics (\ref{dynamics-stack}). Under Assumption \ref{assumption-obj-and-constraint}, \ref{assumption-pseudo-grad-mapping} and \ref{assumption-network}, if $\alpha>\frac{\theta^2}{4\mu}$, then for any feasible initial condition $x(0)\in\mathcal{X}$, the decision variable $x$ asymptotically converges to the v-GNE of the game problem (\ref{problem-formulation}), with forward invariance of the constraint set $\mathcal{X}$.
\end{theorem}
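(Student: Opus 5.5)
The plan is a Lyapunov argument around an equilibrium, combined with LaSalle's invariance principle. The Lyapunov cross terms will be controlled through the variational characterization of the QP \eqref{dynamics-stack-u}.

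\textbf{Step 1: existence of an equilibrium.} Let $x^*$ be the v-GNE; it exists and is unique under \cref{assumption-obj-and-constraint} and \cref{assumption-pseudo-grad-mapping}. Let $\underline{\lambda}$ be its common multiplier and set $\lambda^*=\mathds{1}_N\otimes\underline{\lambda}$. I must produce $z^*$ with $\Lambda(x^*,z^*)=\lambda^*$.

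Since $F(x^*)+G_d^\top\lambda^*=0$, we have $G_dG_d^\top\lambda^*+G_dF(x^*)=0$, and $L\lambda^*=0$. Hence \eqref{u-KKT-stack-1} at $(x^*,\lambda^*)$ reduces to
\[
Lz^*=\alpha(G_dx^*-H_s)+w^*.
\]
Choose $w_i^*=-\frac{\alpha}{N}(Gx^*-H)$. This is nonnegative by primal feasibility. It also satisfies $w^{*\top}\lambda^*=-\alpha\,\underline{\lambda}^\top(Gx^*-H)=0$ by complementarity.

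With this choice the right-hand side is annihilated by $\mathds{1}_N^\top\otimes I_m$. Because the graph is connected, the right-hand side therefore lies in $\operatorname{range}(L)$, so a suitable $z^*$ exists. Since the QP solution is unique, $\Lambda(x^*,z^*)=\lambda^*$, and $(x^*,z^*)$ is an equilibrium.

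\textbf{Step 2: monotonicity inequality from the QP.} Write $\Delta x=x-x^*$, $\Delta z=z-z^*$, $\Delta\lambda=\lambda-\lambda^*$ and $\Delta F=F(x)-F(x^*)$. Let
\[
\nabla Q(\lambda;x,z)=G_dG_d^\top\lambda+G_dF(x)-\alpha(G_dx-H_s)+Lz+L\lambda
\]
be the gradient of the QP objective. Optimality of $\lambda$ over $\mathbb{R}^{Nm}_+$ gives $(\lambda^*-\lambda)^\top\nabla Q(\lambda;x,z)\ge0$. Optimality of $\lambda^*$ gives $(\lambda-\lambda^*)^\top\nabla Q(\lambda^*;x^*,z^*)\ge0$. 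Adding the two yields
\[
\alpha\,\Delta\lambda^\top G_d\Delta x\;\ge\;\|G_d^\top\Delta\lambda\|^2+\Delta\lambda^\top L\Delta\lambda+\Delta\lambda^\top G_d\Delta F+\Delta\lambda^\top L\Delta z .
\]

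\textbf{Step 3: Lyapunov derivative.} Take $V=\frac{\alpha}{2}\|\Delta x\|^2+\frac12\|\Delta z\|^2$. Using $L\lambda^*=0$,
\[
\dot V=-\alpha\Delta x^\top\Delta F-\alpha\Delta x^\top G_d^\top\Delta\lambda+\Delta z^\top L\Delta\lambda .
\]
Substituting the inequality from Step 2 cancels the $\Delta z$ cross term. With strong monotonicity, Lipschitz continuity and Cauchy--Schwarz this gives
\[
\dot V\le-\alpha\mu\|\Delta x\|^2+\theta\|\Delta x\|\,\|G_d^\top\Delta\lambda\|-\|G_d^\top\Delta\lambda\|^2-\Delta\lambda^\top L\Delta\lambda .
\]
The quadratic form in $(\|\Delta x\|,\|G_d^\top\Delta\lambda\|)$ is negative definite exactly when $\theta^2<4\alpha\mu$, which is the stated condition $\alpha>\frac{\theta^2}{4\mu}$. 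Therefore
\[
\dot V\le-c\bigl(\|\Delta x\|^2+\|G_d^\top\Delta\lambda\|^2\bigr)-\Delta\lambda^\top L\Delta\lambda\le0
\]
for some $c>0$.

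\textbf{Step 4: LaSalle.} $V$ is radially unbounded in $(x,z)$, and the vector field is locally Lipschitz by \cref{lemma-compact-lipschitz}. Hence trajectories are unique and bounded, and LaSalle's principle applies.

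On the set $\{\dot V=0\}$ we have $\Delta x=0$, $G_d^\top\Delta\lambda=0$ and $L\Delta\lambda=0$. By the same full-row-rank argument used in \cref{lemma-compact-lipschitz}, this forces $\Delta\lambda=0$. Consequently $\dot z=L\lambda^*=0$, so the largest invariant set consists of equilibria $(x^*,z)$. In particular $x(t)\to x^*$, the v-GNE, as identified in \cref{lemma-vgneeq}. Forward invariance of $\mathcal{X}$ is already given by \cref{lemma-forwardinvariance}.

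\textbf{Main obstacle.} I expect the delicate point to be the non-uniqueness of $z^*$, since $z$ only converges to an equilibrium set. If convergence of $z$ itself is required, I would note that every point of the limit set is an equilibrium. Re-centering $V$ at any such point shows that the distance to it is nonincreasing, which yields convergence to a single equilibrium.
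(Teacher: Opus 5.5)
Your proof is correct and follows the paper's argument: the same Lyapunov function $V=\frac{\alpha}{2}\|x-\bar{x}\|^2+\frac{1}{2}\|z-\bar{z}\|^2$, the same cancellation of the $z$-cross term through the QP optimality conditions, the same threshold $\alpha>\theta^2/(4\mu)$, and LaSalle. Your variational-inequality form of the QP optimality is equivalent to the paper's $(\lambda-\bar{\lambda})^\top(w-\bar{w})=-\lambda^\top\bar{w}-\bar{\lambda}^\top w\le 0$. Your changes are sound refinements rather than a different route: you construct an equilibrium $z^*$ explicitly, which the paper takes for granted, and you keep the $-\|G_d^\top\Delta\lambda\|^2$ term instead of absorbing it with Young's inequality, so that $\Delta\lambda=0$ on $\{\dot V=0\}$ follows immediately.
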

\begin{proof}
    We consider a Lyapunov function candidate 
    $$V = \alpha V_1+V_2,$$
    where
    $$V_1 = \frac{1}{2}\|x-\bar{x}\|^2\quad\text{and}\quad V_2 = \frac{1}{2}\|z-\bar{z}\|^2.$$  
    Taking the derivative of $V_2$ along (\ref{dynamics-stack-z}) gives
\begin{align*}
    \dot{V}_2 =& (z-\bar{z})^\top L\lambda\\
    =& (\lambda-\bar{\lambda})^\top L(z-\bar{z})\\
    =& (\lambda-\bar{\lambda})^\top \left[(w-\bar{w})-L(\lambda-\bar{\lambda})\right.\\
    &\left.+\alpha G_d(x-\bar{x})-G_d(F(x)-F(\bar{x}))-G_dG_d^\top (\lambda-\bar{\lambda})\right],
\end{align*}
where the last equality holds by subtracting (\ref{u-KKT-stack-1}) with (\ref{u-KKT-equilibrium-1}). Taking the derivative of $\alpha V_1$ along (\ref{dynamics-stack-x}) gives
\begin{align*}
    \dot{V}_1 &= -(x-\bar{x})^\top(F(x) + G_d^\top \lambda)\\
    & = -(x-\bar{x})^\top\left[(F(x)+G_d^\top\lambda) - (F(\bar{x})+G_d^\top\bar{\lambda})\right]\\
    & = -(x-\bar{x})^\top(F(x)-F(\bar{x})) - (x-\bar{x})^\top G_d^\top(\lambda-\bar{\lambda})\\
    &\leq -\mu\|x-\bar{x}\|^2-(x-\bar{x})^\top G_d^\top(\lambda-\bar{\lambda}),
\end{align*}
where second equality holds by (\ref{equilibrium-1}) and the last inequality holds by the strong monotonicity of the pseudo-gradient mapping $F(\cdot)$. Then we compute the derivative of $V$ as follows:
\begin{align*}
    \dot{V} =& \alpha\dot{V}_1+\dot{V}_2\\
     \leq& -\mu\alpha\|x-\bar{x}\|^2 + (\lambda-\bar{\lambda})^\top(w-\bar{w})-\lambda^\top L\lambda \\
    &- \|G_d^\top(\lambda-\bar{\lambda})\|^2- (\lambda-\bar{\lambda})^\top G_d(F(x)-F(\bar{x})).
\end{align*}
By Young's inequality, and using $\lambda^\top w=0$ and $\bar{\lambda}^\top\bar{w}=0$, we further obtain
\begin{align*}
    \dot{V}\leq& -\mu\alpha\|x-\bar{x}\|^2 + (\lambda-\bar{\lambda})^\top(w-\bar{w})-\lambda^\top L\lambda \\
    & - \|G_d^\top(\lambda-\bar{\lambda})\|^2+ \|G_d^\top(\lambda-\bar{\lambda})\|^2+\frac{1}{4}\|F(x)-F(\bar{x})\|^2\\
    =&-\mu\alpha\|x-\bar{x}\|^2 - \lambda^\top\bar{w} - \bar{\lambda}^\top w - \lambda^\top L\lambda +\frac{1}{4}\|F(x)-F(\bar{x})\|^2\\
    \leq&-\left(\mu\alpha - \frac{\theta^2}{4}\right)\|x-\bar{x}\|^2 - \lambda^\top\bar{w} - \bar{\lambda}^\top w - \lambda^\top L\lambda.
\end{align*}
Since $\lambda,w,\bar{\lambda},\bar{w}\geq0$, then $\dot{V}\leq 0$, as long as $\alpha>\frac{\theta^2}{4\mu}$. By the radial unboundedness of $V$, we define $\mathcal{P}$ to be any compact sublevel set of $V$ that contains the initial condition $(x(0),z(0))$. Moreover, by Nagumo's theorem \cite{khalil2002}, we know that $\mathcal{P}$ is forward invariant. Let $E = \{(x,z)\in\mathcal{P} \mid \dot{V} = 0\}\subseteq \{(x,z)\in\mathcal{P} \mid x = \bar{x}, L\bar{\lambda} = L\Lambda(\bar{x},\bar{z}) = 0\}$. By LaSalle's invariance principle, $(x(t),z(t))$ converges asymptotically to the largest positively invariant subset of $E$. Therefore, $x(t)\to\bar{x}$ asymptotically, and by \cref{lemma-forwardinvariance} and \cref{lemma-vgneeq}, we conclude that the dynamics (\ref{dynamics-stack}) solves the game problem (\ref{problem-formulation}) for v-GNE with safety guarantees.
\end{proof}
\begin{remark}
    We discuss the role of the parameter $\alpha$ and provide an intuition for the condition $\alpha>\frac{\theta^2}{4\mu}$ in \cref{theorem-singleintegrator}. We observe that for points lying in the interior of the constraint set $\mathcal{X}$ and a relatively small $\alpha$, by Nagumo's theorem, the admissible directions of the trajectory either point inside of the set or parallel to the boundary, whereas for a larger $\alpha$, more admissible directions (possibly pointing outside of $\mathcal{X}$) are available. 
    In this light, $\alpha$ serves as a measure for the richness of the admissible directions. Furthermore, by \cite[Proposition 1 \& Proposition 5]{gadjov2022exact}, \cref{assumption-pseudo-grad-mapping} implies $F(\cdot)$ being $\frac{\theta^2}{\mu}$-Lipschitz, where the value of $\frac{\theta^2}{\mu}$ indicates how drastically the pseudo-gradient can vary. Thus, the condition $\alpha>\frac{\theta^2}{4\mu}$ means that the available admissible directions for the seeking trajectory needs to be sufficiently rich to handle the variation of the pseudo-gradient, representing an interplay of equilibrium seeking and safety enhancing.  
\end{remark}
\section{Safe GNE seeking under partial-information setting}\label{sec:partialinfo}

In this section, we extend our safe GNE seeking design to partial-information settings, where each player can only access the decisions of its neighbors through the communication network $\mathcal{G}(\mathcal{I},\mathcal{E},\mathcal{A})$, rather than a full decision profile of all other players (cf. \cref{dynamics}). 

Similar to \cite{gadjov2022exact,pavel2019}, we adopt an estimation scheme where each player $i$ maintains estimates of all other players' decisions. Specifically, we augment player $i$'s decision vector by $\mathbf{x}_i = \col\{\mathbf{x}_i^1, \dots, \mathbf{x}_i^{i-1}, x_i, \mathbf{x}_i^{i+1}, \dots, \mathbf{x}_i^N\}$, where $\mathbf{x}_i^j$ denotes its estimate of player $j$'s action. By the definition of the augmented decision variable, the true decision $x_i$ can be recovered by left-multiplying \textbf{x}$_i$ with a selection matrix, i.e., $x_i = \mathcal{R}_i\textbf{x}_i$, where
\[\mathcal{R}_i = \begin{bmatrix}
    0_{n_i\times n_{<i}} & I_{n_i} & 0_{n_i\times n_{>i}}
\end{bmatrix},\]
$n_{<i} = \sum_{j = 1}^{i-1}n_j$ and $n_{>i} = \sum_{j=i+1}^Nn_j$. Moreover, we notice that when the augmented decision variables $\textbf{x}_i$ reach consensus, i.e., $\textbf{x}_i = \textbf{x}_j$ for all $i,j\in\mathcal{I}$, each player's estimates of other players' decisions are the true decisions. In this way, the partial derivative $\nabla_{x_i}J_i(x_i,x_{-i})$ can be approximated by computing $\nabla_{x_i}J_i(\textbf{x}_i)$. 

However, prior to the convergence of the algorithm, the estimation error might drive the decision variables outside the constraint set. To enhance the safety, we consider reformulating the constraint set of the game by incorporating the decision estimates. By letting $\textbf{x} = \col\{\textbf{x}_i\}_{i\in\mathcal{I}}$, $\mathcal{R} = \diag\{\mathcal{R}_i\}_{i\in\mathcal{I}}$, and considering that $x = \mathcal{R}\textbf{x}$, the constraint set $\mathcal{X} = \{x\in\mathbb{R}^n|Gx\leq H\}$ can be rewritten, in terms of the augmented decision variable, as
\begin{equation}\label{augmented-constraints}
    \hat{\mathcal{X}} = \{\textbf{x}\in\mathbb{R}^{Nn}|G\mathcal{R}\textbf{x}\leq H\}.
\end{equation}

Under partial-information setting, the design objectives in \cref{remark-three-objectives} boil down to the design of dynamics for the augmented decision variable $\textbf{x}_i$, such that
\begin{enumerate}[label=(O\arabic*$'$)]
    \item The constraint set $\hat{\mathcal{X}}$ is forward invariant;\label{objective-partial-sf}
    \item At equilibrium, the augmented decision variables of all players are identical, i.e., $\bar{\textbf{x}}_i = \bar{\textbf{x}}_j$, $\forall i,j\in\mathcal{I}$, and the true decisions $\bar{x}_i = \mathcal{R}_i\bar{\textbf{x}}_i$ constitute the v-GNE;\label{objective-partial-eq}
    \item The dynamics asymptotically converges to its equilibrium.\label{objective-partial-as}
\end{enumerate}

\subsection{Distributed algorithm design}
Based on the discussion above, we design the dynamics for each play $i\in\mathcal{I}$ as follows:
\begin{subequations}\label{dynamics-partial-info}
    \begin{align}
    \dot{\textbf{x}}_i &= -\mathcal{R}_i^\top \nabla_{x_i}J_i(\textbf{x}_i) - \mathcal{R}_i^\top g_i^\top\lambda_i - c\sum_{j\in\mathcal{N}_i} a_{ij}(\textbf{x}_i - \textbf{x}_j),\label{dynamics-partial-x}\\
    \dot{z}_i &= \sum_{j\in\mathcal{N}_i} a_{ij}(\lambda_i - \lambda_j),\label{dynamics-partial-z}\\
    \lambda_i &= \argmin_{\lambda_i\geq 0} \begin{multlined}[t]
        \frac{1}{2}\|g_i^\top\lambda_i\|^2 + \lambda_i^\top \Bigl[ g_i\nabla_{x_i}J_i(\textbf{x}_i) + cg_i\mathcal{R}_i\sum_{j\in\mathcal{N}_i} a_{ij}(\textbf{x}_i - \textbf{x}_j) \\
        - \alpha(g_i\mathcal{R}_i\textbf{x}_i - h_i) + \sum_{j\in\mathcal{N}_i} a_{ij}(z_i-z_j) + \frac{1}{2}\sum_{j\in\mathcal{N}_i} a_{ij}(\lambda_i - 2\lambda_j) \Bigr],
    \end{multlined}\label{dynamics-partial-lambda}
    \end{align}
\end{subequations}
where $c>0$ is a design parameter. Let $\textbf{F}(\textbf{x}) = \col\{\nabla_{x_i}J_i(\textbf{x}_i)\}_{i\in\mathcal{I}}$ be the extended pseudo-gradient and $\textbf{L} = \mathcal{L}\otimes I_n$ (cf. $L$ defined in \cref{u-KKT-stack}), then by following a similar approach as in \cref{sec:single-int-design}, we can rewrite \cref{dynamics-partial-info} as follows:
\begin{subequations}\label{dynamics-partial-stack}
    \begin{align}
    \dot{\textbf{x}} &= -\mathcal{R}^\top \textbf{F}(\textbf{x}) - \mathcal{R}^\top G_d^\top\lambda - c\textbf{Lx},\\
    \dot{z} &= L\lambda,\\
    \lambda &= \argmin_{\lambda\geq 0} 
    \begin{multlined}[t]
    \frac{1}{2}\|G_d^\top\lambda\|^2 + \lambda^\top [ G_d\textbf{F}(\textbf{x}) + cG_d\mathcal{R}\textbf{Lx} \\- \alpha(G_d\mathcal{R}\textbf{x} - H_s) + Lz ] + \frac{1}{2}\lambda^\top L\lambda.
    \end{multlined}\label{dyanmics-partial-lambda-stack}
    \end{align}
\end{subequations}
We note that the design for \cref{dynamics-partial-x} can be decomposed into a consensus-based estimation, and a primal descent-like update, perturbed by the estimation error, for the true decision variable:
\begin{align*}
    \dot{\textbf{x}}_i^{-i} &= -c\sum_{j\in\mathcal{N}_i}a_{ij}(\textbf{x}_i^{-i} - \textbf{x}_j^{-i}),\\
    \dot{x}_i &= -\nabla_{x_i}J_i(\textbf{x}_i) - g_i^\top\lambda_i - c\sum_{j\in\mathcal{N}_i}a_{ij}(x_i - \textbf{x}_j^i),
\end{align*}
where $\textbf{x}_i^{-i} = \col\{\textbf{x}_i^k\}_{k\in\mathcal{I}\backslash i}$ and $\textbf{x}_j^{-i}$ is defined similarly. Analogous to the full-information case, \cref{dynamics-partial-z} is designed to enforce consensus of the dual variables and the dual variable synthesis law \cref{dynamics-partial-lambda} is designed to keep the control deviation away from the nominal input $\lambda=0$ as small as possible, and restrict the dual variables within the following CBF-based admissible set:
\[
    \hat{K}_\alpha(\textbf{x}) = \{\lambda\in\mathbb{R}_+^{Nm} |G\mathcal{R}(-\mathcal{R}^\top \textbf{F}(\textbf{x}) - \mathcal{R}^\top G_d^\top\lambda - c\textbf{Lx}) \leq -\alpha(G\mathcal{R}\textbf{x} - H)\}.
\]
\begin{remark}\label{remark-partial-compare}
    We compare the partial-information dynamics \cref{dynamics-partial-stack} with the full-information one \cref{dynamics-stack}. By letting $\textbf{F}_a(\textbf{x}) = \mathcal{R}^\top\textbf{F}(\textbf{x}) + c\textbf{L}\textbf{x}$ be the augmented pseudo-gradient, and noticing that $\mathcal{R}\mathcal{R}^\top = I_n$, we observe that the form of dynamics \cref{dynamics-partial-stack} and \cref{dynamics-stack} are identical. Recall that dynamics \cref{dynamics-stack} essentially solves the VI$(\mathcal{X},F)$ in a distributed manner, where for each player, the global decision information is available and the partial derivative $\nabla_{x_i}J_i(x_i,x_{-i})$ can be readily computed. Similarly, \cref{dynamics-partial-stack} can be regarded as a dynamics that solves VI$(\hat{\mathcal{X}},\textbf{F}_a)$, whereas for each player, the calculation of the augmented partial derivative $-\mathcal{R}_i^\top \nabla_{x_i}J_i(\textbf{x}_i) - c\sum_{j\in\mathcal{N}_i} a_{ij}(\textbf{x}_i - \textbf{x}_j)$ only requires the augmented decisions of its neighbors and itself, in other words, the partial- and full-information settings are equivalent after augmentation of the decision variable and the pseudo-gradient.
\end{remark}

\subsection{Convergence analysis}
In this subsection, we show that the objectives \ref{objective-partial-sf} -- \ref{objective-partial-as} are satisfied. By virtue of \cref{remark-partial-compare}, the analysis in \cref{sec:full-info-convergence} can be naturally applied to here as long as $\textbf{F}_a(\cdot)$ is Lipschitz continuous and strongly monotone, which are established by the following two lemmas:
\begin{lemma}
    Under \cref{assumption-pseudo-grad-mapping}, the extended pseudo-gradient mapping $\textbf{F}(\cdot)$ is $\theta_1$-Lipschitz, for some $\theta_1\in[\mu,\theta]$. Furthermore, the augmented pseudo-gradient mapping $\textbf{F}_a(\cdot)$ is $\theta_2$-Lipschitz, where $\theta_2 = \theta_1 + c\sigma_{\max}(\mathcal{L})$.
\end{lemma}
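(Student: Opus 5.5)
The plan is to work directly from the definition of $\textbf{F}$ and split the claim into three parts: an upper bound $\theta$, a lower bound $\mu$ on the best Lipschitz constant $\theta_1$ of $\textbf{F}$, and a triangle-inequality argument for $\textbf{F}_a$. I would define $\theta_1$ as the smallest Lipschitz constant of $\textbf{F}(\cdot)$ and show $\mu\le\theta_1\le\theta$.

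\emph{Upper bound.} For $\textbf{x}=\col\{\textbf{x}_i\}_{i\in\mathcal{I}}$ and $\textbf{y}=\col\{\textbf{y}_i\}_{i\in\mathcal{I}}$, with each $\textbf{x}_i,\textbf{y}_i\in\mathbb{R}^n$, I would write
\[
\|\textbf{F}(\textbf{x})-\textbf{F}(\textbf{y})\|^2=\sum_{i\in\mathcal{I}}\|\nabla_{x_i}J_i(\textbf{x}_i)-\nabla_{x_i}J_i(\textbf{y}_i)\|^2 .
\]
Each map $\nabla_{x_i}J_i(\cdot)$ is the $i$th block of $F(\cdot)$. Hence the $i$th summand is at most $\|F(\textbf{x}_i)-F(\textbf{y}_i)\|^2$, which by \cref{assumption-pseudo-grad-mapping}(ii) is at most $\theta^2\|\textbf{x}_i-\textbf{y}_i\|^2$. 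Summing over $i$ gives $\|\textbf{F}(\textbf{x})-\textbf{F}(\textbf{y})\|\le\theta\|\textbf{x}-\textbf{y}\|$, so $\theta_1\le\theta$.

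\emph{Lower bound.} I would exhibit a pair of points on which the Lipschitz ratio is at least $\mu$. Fix an index $i$, a point $u\in\mathbb{R}^n$ and a nonzero $d\in\mathbb{R}^{n_i}$. Set $\textbf{x}_k=\textbf{y}_k=u$ for all $k$, except $\textbf{y}_i=u+\mathcal{R}_i^\top d$. Then $\|\textbf{x}-\textbf{y}\|=\|d\|$, and only the $i$th block of $\textbf{F}$ changes. Apply strong monotonicity (\cref{assumption-pseudo-grad-mapping}(i)) to $u$ and $v=u+\mathcal{R}_i^\top d$. Since $u-v$ is supported on block $i$ only,
\[
-d^\top\bigl(\nabla_{x_i}J_i(u)-\nabla_{x_i}J_i(v)\bigr)\ge\mu\|d\|^2 .
\]
By Cauchy--Schwarz this gives $\|\textbf{F}(\textbf{x})-\textbf{F}(\textbf{y})\|\ge\mu\|\textbf{x}-\textbf{y}\|$, hence $\theta_1\ge\mu$. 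This lower bound is the step that needs care. A naive choice of consensus points $\textbf{x}=\mathds{1}_N\otimes u$ would only give $\mu/\sqrt N$, so the perturbation must be confined to player $i$'s own block in its own estimate.

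\emph{Augmented map.} For $\textbf{F}_a(\textbf{x})=\mathcal{R}^\top\textbf{F}(\textbf{x})+c\textbf{L}\textbf{x}$ I would use the triangle inequality. Since $\mathcal{R}\mathcal{R}^\top=I_n$, the map $\mathcal{R}^\top$ is an isometry, so $\|\mathcal{R}^\top(\textbf{F}(\textbf{x})-\textbf{F}(\textbf{y}))\|\le\theta_1\|\textbf{x}-\textbf{y}\|$. Also $\textbf{L}=\mathcal{L}\otimes I_n$ is symmetric positive semidefinite with the same eigenvalues as $\mathcal{L}$, so $\|c\textbf{L}(\textbf{x}-\textbf{y})\|\le c\,\sigma_{\max}(\mathcal{L})\|\textbf{x}-\textbf{y}\|$. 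Adding the two bounds yields the constant $\theta_2=\theta_1+c\,\sigma_{\max}(\mathcal{L})$.
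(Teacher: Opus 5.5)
Your proof is correct. The treatment of $\textbf{F}_a$ matches the paper's exactly: the triangle inequality, the fact that $\mathcal{R}\mathcal{R}^\top=I_n$ makes $\mathcal{R}^\top$ an isometry, and the operator norm $\|\textbf{L}\|=\sigma_{\max}(\mathcal{L})$.

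The difference is in the first half. The paper does not prove the Lipschitz property of $\textbf{F}$; it imports it from \cite[Lemma 3]{bianchi2021}. You derive it directly:
\begin{itemize}
\item For the upper bound, you note that $\nabla_{x_i}J_i(\textbf{x}_i)$ is a block of $F(\textbf{x}_i)$ and sum over $i$.
\item For the lower bound, you perturb only player $i$'s own block $\textbf{x}_i^i$ and combine strong monotonicity with Cauchy--Schwarz.
\end{itemize}
Your version is self-contained and proves something sharper: the \emph{smallest} Lipschitz constant of $\textbf{F}$ lies in $[\mu,\theta]$. Your remark that consensus test points $\mathds{1}_N\otimes u$ would only give $\mu/\sqrt{N}$ correctly explains why the perturbation must be block-local.

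For the existential claim as literally stated, however, the lower-bound step is not needed. \cref{assumption-pseudo-grad-mapping} already forces $\mu\le\theta$, since $\mu\|x-y\|^2\le(F(x)-F(y))^\top(x-y)\le\theta\|x-y\|^2$. So your upper bound alone lets you take $\theta_1=\theta$. In short, the paper's citation buys brevity, and your argument buys transparency about where the sharp constant $\theta_1$ actually sits.
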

\begin{proof}
    By \cref{assumption-pseudo-grad-mapping}, and following from \cite[Lemma 3]{bianchi2021}, we have that the extended pseudo-gradient $\textbf{F}(\cdot)$ is $\theta_1$-Lipschitz for some $\theta_1\in[\mu,\theta]$. Then $\forall \textbf{x},\textbf{y}\in\mathbb{R}^{Nn}$
    \begin{align*}
        \|\textbf{F}_a(\textbf{x}) - \textbf{F}_a(\textbf{y})\| &\leq \|\mathcal{R}^\top (\textbf{F}(\textbf{x}) - \textbf{F}(\textbf{y}))\| + c\|\textbf{L}(\textbf{x} - \textbf{y})\|\\
        &\leq \|\textbf{F}(\textbf{x}) - \textbf{F}(\textbf{y})\| + c\|\textbf{L}\|\cdot\|\textbf{x} - \textbf{y}\|\\
        &\leq (\theta_1 + c\sigma_{\max}(\mathcal{L}))\|\textbf{x} - \textbf{y}\|,
    \end{align*}
    i.e., $\textbf{F}_a(\cdot)$ is $\theta_2$-Lipschitz with $\theta_2 = \theta_1 + c\sigma_{\max}(\mathcal{L})$.
\end{proof}

\begin{lemma}\cite[Lemma 3]{pavel2019}\label{lemma:restricted-strongly-monotone}
    Under Assumption \ref{assumption-obj-and-constraint}, \ref{assumption-pseudo-grad-mapping} and \ref{assumption-network}, let 
    \[\Psi = \begin{bmatrix} 
        \frac{\mu}{N} & -\frac{\theta + \theta_1}{2\sqrt{N}} \\ 
        -\frac{\theta + \theta_1}{2\sqrt{N}} & c\sigma_2(\mathcal{L}) - \theta_1 
    \end{bmatrix},\]
    where $\sigma_2(\mathcal{L})$ is the second smallest eigenvalue of the matrix $\mathcal{L}$. Then $\Psi\succ 0$ for any $c>c_{\min} = \frac{1}{\sigma_2(\mathcal{L})}\left(\frac{(\theta + \theta_1)^2}{4\mu} + \theta_1\right)$, and
    \begin{equation}\label{restricted-monotonicity-inequality}
        (\textbf{x} - \textbf{y})^\top(\textbf{F}_a(\textbf{x}) - \textbf{F}_a(\textbf{y}))\geq\bar{\mu}\|\textbf{x} - \textbf{y}\|^2,
    \end{equation}
    where $\bar{\mu} = \sigma_{\min}(\Psi)$ and $\textbf{y} = \mathds{1}_N\otimes y$ for some $x\in\mathbb{R}^n$.
\end{lemma}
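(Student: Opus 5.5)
We follow the standard restricted-monotonicity argument of \cite{gadjov2022exact,pavel2019}: split the estimate profile into its consensus part and its disagreement part, and bound each contribution separately. Throughout, $\textbf{y}=\mathds{1}_N\otimes y$ is a consensus vector, so $\textbf{L}\textbf{y}=0$ and $\textbf{F}(\textbf{y})=F(y)$. Decompose
\[
\textbf{x}=\textbf{x}^{\parallel}+\textbf{x}^{\perp},\qquad \textbf{x}^{\parallel}=\mathds{1}_N\otimes \bar{x},\quad \bar{x}=\tfrac{1}{N}(\mathds{1}_N^\top\otimes I_n)\textbf{x},
\]
so that $\textbf{x}^{\perp}$ lies in the orthogonal complement of $\mathrm{range}(\mathds{1}_N\otimes I_n)$. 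Then $\textbf{x}-\textbf{y}=\mathds{1}_N\otimes(\bar{x}-y)+\textbf{x}^\perp$, with
\[
\|\textbf{x}-\textbf{y}\|^2=N\|\bar{x}-y\|^2+\|\textbf{x}^\perp\|^2 .
\]

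Next expand the left-hand side of \cref{restricted-monotonicity-inequality}:
\[
(\textbf{x}-\textbf{y})^\top\mathcal{R}^\top(\textbf{F}(\textbf{x})-\textbf{F}(\textbf{y}))+c\,(\textbf{x}^\perp)^\top\textbf{L}\textbf{x}^\perp .
\]
The Laplacian term is at least $c\,\sigma_2(\mathcal{L})\|\textbf{x}^\perp\|^2$ by \cref{assumption-network}. In the first term, write $\textbf{F}(\textbf{x})-\textbf{F}(\textbf{y})=[\textbf{F}(\textbf{x})-\textbf{F}(\textbf{x}^\parallel)]+[F(\bar{x})-F(y)]$ and note that $\mathcal{R}(\mathds{1}_N\otimes v)=v$. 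This yields four pieces:
\begin{enumerate}[label=(\alph*)]
    \item $(\bar{x}-y)^\top(F(\bar{x})-F(y))\geq \mu\|\bar{x}-y\|^2$, by \cref{assumption-pseudo-grad-mapping}(i);
    \item $(\bar{x}-y)^\top(\textbf{F}(\textbf{x})-\textbf{F}(\textbf{x}^\parallel))\geq -\theta_1\|\bar{x}-y\|\,\|\textbf{x}^\perp\|$, by the $\theta_1$-Lipschitzness of $\textbf{F}$ from the preceding lemma;
    \item $(\mathcal{R}\textbf{x}^\perp)^\top(F(\bar{x})-F(y))\geq -\theta\|\textbf{x}^\perp\|\,\|\bar{x}-y\|$, using $\|\mathcal{R}\|\le 1$ and \cref{assumption-pseudo-grad-mapping}(ii);
    \item $(\mathcal{R}\textbf{x}^\perp)^\top(\textbf{F}(\textbf{x})-\textbf{F}(\textbf{x}^\parallel))\geq -\theta_1\|\textbf{x}^\perp\|^2$.
\end{enumerate}

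Collecting these bounds gives
\[
\geq \mu\|\bar{x}-y\|^2-(\theta+\theta_1)\|\bar{x}-y\|\,\|\textbf{x}^\perp\|+(c\sigma_2(\mathcal{L})-\theta_1)\|\textbf{x}^\perp\|^2 .
\]
Substituting $a=\sqrt{N}\|\bar{x}-y\|$ and $b=\|\textbf{x}^\perp\|$ turns this into exactly $[a\;b]\,\Psi\,[a\;b]^\top$. Hence it is bounded below by $\sigma_{\min}(\Psi)(a^2+b^2)=\bar{\mu}\|\textbf{x}-\textbf{y}\|^2$, using the norm identity above.

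Finally, $\Psi\succ0$ holds if and only if $\mu/N>0$ (always true) and $\det\Psi>0$. The latter reads $\frac{\mu}{N}(c\sigma_2(\mathcal{L})-\theta_1)>\frac{(\theta+\theta_1)^2}{4N}$, which is precisely $c>c_{\min}$. The step I expect to need the most care is the bookkeeping of $\mathcal{R}$ in the cross terms (b)--(c), in particular keeping the $\sqrt{N}$ scaling consistent so that the off-diagonal entries of $\Psi$ come out as stated. The remaining steps are routine.
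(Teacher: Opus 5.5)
Your proposal is correct and is essentially the standard argument behind the cited result \cite[Lemma 3]{pavel2019}; the paper itself gives no proof, only the citation. You split $\textbf{x}$ into its consensus part $\mathds{1}_N\otimes\bar{x}$ and disagreement part $\textbf{x}^{\perp}$, bound the four cross terms using $\mu$, $\theta$ and $\theta_1$, add $c\,\sigma_2(\mathcal{L})\|\textbf{x}^{\perp}\|^2$ from the Laplacian, and read off the quadratic form in $(\sqrt{N}\|\bar{x}-y\|,\|\textbf{x}^{\perp}\|)$ with matrix $\Psi$; your handling of $\mathcal{R}$ and of the $\sqrt{N}$ scaling is consistent, and the determinant condition reduces exactly to $c>c_{\min}$.
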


\begin{remark}
    The property that $\textbf{F}_a(\cdot)$ possesses according to \cref{lemma:restricted-strongly-monotone} is the restricted strong monotonicity (cf. strong monotonicity in \cref{assumption-pseudo-grad-mapping}), where the second variable $\textbf{y}$ is restricted to the consensus subspace, and this property is ensured by choosing the design parameter $c$ large enough. We emphasize that it is possible for $\textbf{F}_a(\cdot)$ to be restricted strongly monotone without the parameter $c$, i.e., when $c=1$. However, this will impose an additional requirement on the communication network, as in \cite{gadjov2018passivity,weijian2024}, and our design avoids this topological restriction by shifting the burden to an additional design parameter. Moreover, we notice that in \cref{sec:full-info-convergence}, the strong monotonicity inequality is used only in \cref{theorem-singleintegrator}, where the second variable of the inequality is the equilibrium of the collective decision. Therefore, the stability analysis for the full-information dynamics can be applied here if the equilibrium of the stacked augmented decision variable $\bar{\textbf{x}}$ lies in the consensus subspace.
\end{remark}

We intend to adapt the analysis from \cref{sec:full-info-convergence} to partial-information settings. However, by \cref{remark-partial-compare}, dynamics \cref{dynamics-partial-stack} solves VI$(\hat{\mathcal{X}},\textbf{F}_a)$ with safety guarantees rather than the original VI$(\mathcal{X},F)$, and how the former implies the latter need to be illustrated. We first show the forward invariance of the constraint set $\mathcal{X}$ and the augmented set $\hat{\mathcal{X}}$, which is \ref{objective-partial-sf}.

\begin{lemma}\label{lemma-partial-forwardinvariance}
    Consider dynamics \cref{dynamics-partial-stack}. Under \cref{assumption-network}, for any $\alpha>0$ and any feasible initial condition $x(0)\in\mathcal{X}$ or equivalently, $\textbf{x}(0)\in\hat{\mathcal{X}}$, both the sets $\hat{\mathcal{X}}$ and $\mathcal{X}$ are forward invariant.
\end{lemma}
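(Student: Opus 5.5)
We mirror the proof of \cref{lemma-forwardinvariance}, with the augmented variable playing the role of $x$. There are three steps: show that the QP solution $\lambda$ lies in $\hat{K}_\alpha(\textbf{x})$, invoke \cref{lemma-zcbf} for $\hat{\mathcal{X}}$, and transfer invariance to $\mathcal{X}$ through $x=\mathcal{R}\textbf{x}$.

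\emph{Step 1: the QP solution is admissible.} First write the KKT conditions of \cref{dyanmics-partial-lambda-stack}. With a multiplier $w\geq 0$, $w^\top\lambda=0$, they read
\[G_dG_d^\top\lambda + G_d\textbf{F}(\textbf{x}) + cG_d\mathcal{R}\textbf{L}\textbf{x} - \alpha(G_d\mathcal{R}\textbf{x}-H_s) + Lz + L\lambda - w = 0.\]
Using $w\geq 0$, the left-hand side without $w$ is nonnegative. Left-multiply by $\mathds{1}_N^\top\otimes I_m$ and use $(\mathds{1}_N^\top\otimes I_m)L=0$, $(\mathds{1}_N^\top\otimes I_m)G_d=G$, $(\mathds{1}_N^\top\otimes I_m)H_s=H$. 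This gives
\[GG_d^\top\lambda + G\textbf{F}(\textbf{x}) + cG\mathcal{R}\textbf{L}\textbf{x} - \alpha(G\mathcal{R}\textbf{x}-H)\geq 0.\]
Since $\mathcal{R}\mathcal{R}^\top=I_n$, we have $G\mathcal{R}\mathcal{R}^\top\textbf{F}=G\textbf{F}$ and $G\mathcal{R}\mathcal{R}^\top G_d^\top\lambda = GG_d^\top\lambda$. Hence the inequality is exactly $\lambda\in\hat{K}_\alpha(\textbf{x})$. This also shows $\hat{K}_\alpha(\textbf{x})$ is nonempty, so $g(\textbf{x})=G\mathcal{R}\textbf{x}-H$ is a VCBF for $\hat{\mathcal{X}}$ in the sense of \cref{definition-vcbf}. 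In that definition the drift is $-\mathcal{R}^\top\textbf{F}-c\textbf{L}\textbf{x}$ and the input matrix is $-\mathcal{R}^\top G_d^\top$.

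\emph{Step 2: local Lipschitzness and invariance of $\hat{\mathcal{X}}$.} Next we need local Lipschitzness of the closed loop in $(\textbf{x},z)$. The QP \cref{dyanmics-partial-lambda-stack} has the same quadratic term $\frac12\|G_d^\top\lambda\|^2+\frac12\lambda^\top L\lambda$ as in \cref{lemma-compact-lipschitz}. That term is positive definite by the argument already given there: connectivity forces a kernel vector to have equal blocks, and full row rank of $G$ then forces it to be zero. The linear term is locally Lipschitz in $(\textbf{x},z)$, assuming, as implicitly in \cref{lemma-compact-lipschitz}, that the gradients are locally Lipschitz; for instance $\textbf{F}$ is Lipschitz by the preceding lemma. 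Therefore \cite[Theorem 2.1]{coroianu2016} again gives a locally Lipschitz solution map $\hat\Lambda(\textbf{x},z)$. \Cref{lemma-zcbf} then yields forward invariance of $\hat{\mathcal{X}}$ whenever $\textbf{x}(0)\in\hat{\mathcal{X}}$.

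\emph{Step 3: transfer to $\mathcal{X}$.} Finally, $x(t)=\mathcal{R}\textbf{x}(t)$, and by definition \cref{augmented-constraints} we have $\textbf{x}\in\hat{\mathcal{X}}$ if and only if $\mathcal{R}\textbf{x}\in\mathcal{X}$. So $x(0)\in\mathcal{X}$ is equivalent to $\textbf{x}(0)\in\hat{\mathcal{X}}$; the estimates are unconstrained, so any initialization of $\textbf{x}_i^{-i}$ is allowed. Invariance of $\hat{\mathcal{X}}$ then gives $x(t)\in\mathcal{X}$ for all $t\ge0$.

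The main obstacle is Step 1. The consensus term $cG_d\mathcal{R}\textbf{L}\textbf{x}$ does not vanish under summation, since $\mathcal{R}$ and $\textbf{L}$ do not commute. One must check that it is exactly the term $G\mathcal{R}(c\textbf{L}\textbf{x})$ appearing in $\hat{K}_\alpha$. This holds because $(\mathds{1}_N^\top\otimes I_m)G_d\mathcal{R}=G\mathcal{R}$, and this identity is what the inclusion of that term in the QP was designed to ensure.
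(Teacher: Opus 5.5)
Your proposal is correct and follows the same route as the paper. The paper simply observes that \cref{dynamics-partial-stack} is formally identical to \cref{dynamics-stack} with $\textbf{x}$ and $\hat{\mathcal{X}}$ in place of $x$ and $\mathcal{X}$, invokes the argument of \cref{lemma-forwardinvariance} to get invariance of $\hat{\mathcal{X}}$, and transfers it to $\mathcal{X}$ through $x=\mathcal{R}\textbf{x}$ and \cref{augmented-constraints}; your Steps 1--2 just spell out that ``similar result'' (summing the QP's KKT conditions with $\mathds{1}_N^\top\otimes I_m$, using $\mathcal{R}\mathcal{R}^\top=I_n$ to match $\hat{K}_\alpha(\textbf{x})$, and re-checking positive definiteness and local Lipschitzness of the QP solution map), which the paper leaves implicit.
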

\begin{proof}
    Notice that dynamics \cref{dynamics-partial-stack} is formally identical to dynamics \cref{dynamics-stack}, where the decision variable $x$ is replaced by the augmented decision $\textbf{x}$ and the constraint set $\mathcal{X}$ is replaced by $\hat{\mathcal{X}}$. Then by a result similar to \cref{lemma-forwardinvariance}, but applicable to dynamics \cref{dynamics-partial-stack}, the constraint set $\hat{\mathcal{X}}$ is forward invariant. Since $x = \mathcal{R}\mathbf{x}$, by \cref{augmented-constraints}, it immediately follows that the original constraint set $\mathcal{X}$ is also forward invariant.
\end{proof}

Next, we verify that the equilibrium of \cref{dynamics-partial-stack} is the v-GNE of the game, where the collective augmented decision variable lies in the consensus subspace, and thus \ref{objective-partial-eq} is satisfied.

\begin{lemma}\label{lemma:partial-eq}
    Under \cref{assumption-network}, the equilibrium of the collective augmented decision variables $\bar{\textbf{x}}$ lies in the consensus subspace, i.e., $\bar{\textbf{x}} = \mathds{1}_N\otimes \bar{x}$ for some $\bar{x}\in\mathbb{R}^n$. Moreover, the equilibrium $(\bar{\textbf{x}},\bar{z})$ of system (\ref{dynamics-partial-stack}) along with the corresponding dual variable $\bar{\lambda} = \Lambda(\bar{\textbf{x}},\bar{z})$ solves the KKT conditions (\ref{KKT-vGNE}), and $\bar{\lambda} = \mathds{1}_N\otimes \underline{\lambda}$, for some $\underline{\lambda}\in\mathbb{R}^m_+$. Hence, $\bar{x} = \mathcal{R}\bar{\textbf{x}}$ is a v-GNE of the game problem (\ref{problem-formulation}).
\end{lemma}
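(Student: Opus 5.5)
The plan is to write out the equilibrium conditions of \cref{dynamics-partial-stack} and follow the proof of \cref{lemma-vgneeq}. The only genuinely new step is to show first that $\bar{\textbf{x}}$ lies in the consensus subspace. At an equilibrium $(\bar{\textbf{x}},\bar z)$ with $\bar\lambda=\Lambda(\bar{\textbf{x}},\bar z)$ we have
\begin{align*}
0&=\mathcal{R}^\top\bigl(\textbf{F}(\bar{\textbf{x}})+G_d^\top\bar\lambda\bigr)+c\textbf{L}\bar{\textbf{x}},\\
0&=L\bar\lambda,
\end{align*}
together with the KKT system of \cref{dyanmics-partial-lambda-stack} at $(\bar{\textbf{x}},\bar z)$. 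This KKT system is the analogue of \cref{u-KKT-stack} with multiplier $\bar w\geq 0$ and $\bar w^\top\bar\lambda=0$. By \cref{assumption-network}, $L\bar\lambda=0$ gives $\bar\lambda=\mathds{1}_N\otimes\underline{\lambda}$, and $\underline{\lambda}\geq0$ follows from the constraint of the QP.

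\emph{Consensus step (main obstacle).} I would left-multiply the first equilibrium equation by $\mathds{1}_N^\top\otimes I_n$, using two facts. First, $(\mathds{1}_N^\top\otimes I_n)\textbf{L}=(\mathds{1}_N^\top\mathcal{L})\otimes I_n=0$. Second, $(\mathds{1}_N^\top\otimes I_n)\mathcal{R}^\top v=v$ for any $v=\col\{v_i\}_{i\in\mathcal{I}}$ with $v_i\in\mathbb{R}^{n_i}$; this holds because $\mathcal{R}_i^\top v_i$ places $v_i$ in the $i$th block and zeros elsewhere, so summing over $i$ reassembles $v$. This yields
\[
\textbf{F}(\bar{\textbf{x}})+G_d^\top\bar\lambda=0 .
\]
Substituting back gives $c\textbf{L}\bar{\textbf{x}}=0$. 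Since $c>0$ and $\ker\mathcal{L}=\operatorname{span}(\mathds{1}_N)$, it follows that $\bar{\textbf{x}}=\mathds{1}_N\otimes\bar x$ for some $\bar x\in\mathbb{R}^n$. Consequently $\mathcal{R}\bar{\textbf{x}}=\bar x$, and since every estimate equals the true profile, $\nabla_{x_i}J_i(\bar{\textbf{x}}_i)=\nabla_{x_i}J_i(\bar x_i,\bar x_{-i})$, i.e.\ $\textbf{F}(\bar{\textbf{x}})=F(\bar x)$. The displayed equation therefore becomes \cref{KKT-vGNE-primal} with $\lambda^*=\bar\lambda$.

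\emph{Remaining KKT conditions.} Condition \cref{KKT-vGNE-dualfeasible} holds since $\bar\lambda\geq0$. Condition \cref{KKT-vGNE-primalfeasible} follows from \cref{lemma-partial-forwardinvariance}: $\bar{\textbf{x}}\in\hat{\mathcal{X}}$, and hence $G\bar x=G\mathcal{R}\bar{\textbf{x}}\leq H$. For complementarity, I would take the stationarity equation of the QP,
\[
G_dG_d^\top\bar\lambda+G_d\textbf{F}(\bar{\textbf{x}})+cG_d\mathcal{R}\textbf{L}\bar{\textbf{x}}-\alpha(G_d\mathcal{R}\bar{\textbf{x}}-H_s)+L\bar z+L\bar\lambda-\bar w=0,
\]
and multiply it on the left by $\bar\lambda^\top$. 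Each term other than the $\alpha$-term vanishes:
\begin{itemize}
\item $\bar\lambda^\top G_d(G_d^\top\bar\lambda+\textbf{F}(\bar{\textbf{x}}))=0$ by the primal condition just obtained;
\item $\textbf{L}\bar{\textbf{x}}=0$ by the consensus step;
\item $\bar\lambda^\top L=0$ because $L$ is symmetric and $L\bar\lambda=0$;
\item $\bar\lambda^\top\bar w=0$ by the QP's complementarity condition.
\end{itemize}
What remains is $-\alpha\bar\lambda^\top(G_d\bar x-H_s)=0$. Using $\bar\lambda=\mathds{1}_N\otimes\underline\lambda$, $G=(\mathds{1}_N^\top\otimes I_m)G_d$ and $H=(\mathds{1}_N^\top\otimes I_m)H_s$, this becomes $\underline\lambda^\top(G\bar x-H)=0$, which gives \cref{KKT-vGNE-complementary}. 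All of \cref{KKT-vGNE} therefore holds with identical multipliers, and $\bar x=\mathcal{R}\bar{\textbf{x}}$ is a v-GNE.

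The delicate point is the consensus step. It relies on the averaging identity for $\mathcal{R}^\top$ and on the consensus term entering only through $\textbf{L}$. Once that step is established, everything else is a verbatim transcription of \cref{lemma-vgneeq}.
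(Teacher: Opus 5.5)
Your proposal is correct and follows the paper's proof essentially step for step. You left-multiply the $\dot{\textbf{x}}$-equilibrium equation by $\mathds{1}_N^\top\otimes I_n$ to isolate $\textbf{F}(\bar{\textbf{x}})+G_d^\top\bar\lambda=0$, substitute back to get $\textbf{L}\bar{\textbf{x}}=0$, and then transcribe \cref{lemma-vgneeq}, spelling out the complementarity step (including the vanishing of $cG_d\mathcal{R}\textbf{L}\bar{\textbf{x}}$) more explicitly than the paper does. One minor tightening, which applies equally to the paper: forward invariance alone does not place an arbitrary equilibrium in $\hat{\mathcal{X}}$, but primal feasibility follows directly, since left-multiplying the reduced stationarity equation $-\alpha(G_d\bar x-H_s)+L\bar z-\bar w=0$ by $\mathds{1}_N^\top\otimes I_m$ gives $G\bar x-H=-\frac{1}{\alpha}\sum_{i\in\mathcal{I}}\bar w_i\leq 0$.
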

\begin{proof}
    Notice that at the equilibrium, the following holds:
    \begin{equation}\label{lemma-partial-eq-proof-equation1}
        -\mathcal{R}^\top \textbf{F}(\bar{\textbf{x}}) - \mathcal{R}^\top G_d^\top\bar{\lambda} - c\textbf{L}\bar{\textbf{x}} = 0.
    \end{equation}
    Left-multiply $\mathds{1}^\top\otimes I_n$ on both sides of \cref{lemma-partial-eq-proof-equation1}, and by noticing that $(\mathds{1}^\top\otimes I_n)\mathcal{R}^\top = I_n$, we have $\textbf{F}(\bar{\textbf{x}}) + G_d^\top\bar{\lambda} = 0$. Plugging this back to \cref{lemma-partial-eq-proof-equation1} gives $\textbf{L}\bar{\textbf{x}}=0$, i.e., the equilibrium of augmented decision variable lies in the consensus subspace and each player's estimates of other players' decisions are true decisions. Therefore, $F(\bar{x}) + G_d^\top\bar{\lambda}=0$, which implies \cref{KKT-vGNE-primal}. The rest of the proof follows from \cref{lemma-vgneeq}.
\end{proof}
In addition to \ref{objective-partial-sf} and \ref{objective-partial-eq}, we show that \cref{dynamics-partial-stack} is asymptotically stable, which satisfies \ref{objective-partial-as}, and thus safe GNE seeking under partial-information setting is achieved.
\begin{theorem}\label{theorem-partial-singleintegrator}
    Consider dynamics (\ref{dynamics-partial-stack}). Under Assumption \ref{assumption-obj-and-constraint}, \ref{assumption-pseudo-grad-mapping} and \ref{assumption-network}, if $\alpha> \frac{(\theta_1 + c\sigma_{\max}(\mathcal{L}))^2}{4\bar{\mu}}$, then for any feasible initial condition $x(0)\in\mathcal{X}$, or equivalently $\textbf{x}(0)\in\hat{\mathcal{X}}$, the true decision variable $x$ asymptotically converges to the v-GNE of the game problem (\ref{problem-formulation}) with forward invariance of the constraint set $\mathcal{X}$.
\end{theorem}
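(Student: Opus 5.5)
We mirror the proof of \cref{theorem-singleintegrator}, exploiting \cref{remark-partial-compare}: with $\textbf{F}_a(\textbf{x}) = \mathcal{R}^\top\textbf{F}(\textbf{x}) + c\textbf{L}\textbf{x}$, dynamics \cref{dynamics-partial-stack} reads $\dot{\textbf{x}} = -\textbf{F}_a(\textbf{x}) - \mathcal{R}^\top G_d^\top\lambda$, $\dot z = L\lambda$. Since $\mathcal{R}\mathcal{R}^\top = I_n$, the QP \eqref{dyanmics-partial-lambda-stack} has the same structure as \eqref{dynamics-stack-u} with $G_d$ replaced by $\tilde{G}_d := G_d\mathcal{R}$ (note $G_d\textbf{F}(\textbf{x}) + cG_d\mathcal{R}\textbf{L}\textbf{x} = \tilde{G}_d\textbf{F}_a(\textbf{x})$ and $\tilde{G}_d\tilde{G}_d^\top = G_dG_d^\top$). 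Hence the QP has a unique solution $\Lambda(\textbf{x},z)$ that is locally Lipschitz, as in \cref{lemma-compact-lipschitz}. Its KKT system is
\[\tilde{G}_d\tilde{G}_d^\top\lambda + \tilde{G}_d\textbf{F}_a(\textbf{x}) - \alpha(\tilde{G}_d\textbf{x} - H_s) + Lz + L\lambda - w = 0,\quad 0\le w\perp\lambda\ge 0.\]
Forward invariance of $\hat{\mathcal{X}}$ and $\mathcal{X}$ is given by \cref{lemma-partial-forwardinvariance}. By \cref{lemma:partial-eq}, the equilibrium satisfies $\bar{\textbf{x}} = \mathds{1}_N\otimes\bar{x}$ with $\bar{x}$ the v-GNE, and $\textbf{F}_a(\bar{\textbf{x}}) + \tilde{G}_d^\top\bar\lambda = 0$.

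Take $V = \frac{\alpha}{2}\|\textbf{x}-\bar{\textbf{x}}\|^2 + \frac12\|z-\bar z\|^2$. For the first term, use $\textbf{F}_a(\bar{\textbf{x}}) + \tilde{G}_d^\top\bar\lambda = 0$ and apply \cref{lemma:restricted-strongly-monotone} with $\textbf{y} = \bar{\textbf{x}}$, which is legitimate because $\bar{\textbf{x}}$ lies in the consensus subspace and $c>c_{\min}$. This gives
\[\tfrac{d}{dt}\tfrac12\|\textbf{x}-\bar{\textbf{x}}\|^2 \le -\bar\mu\|\textbf{x}-\bar{\textbf{x}}\|^2 - (\textbf{x}-\bar{\textbf{x}})^\top\tilde{G}_d^\top(\lambda-\bar\lambda).\]
For the second term, write $\dot V_2 = (\lambda-\bar\lambda)^\top L(z-\bar z)$ and substitute the difference of the KKT systems at $(\textbf{x},z)$ and at equilibrium. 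The cross term $\alpha(\lambda-\bar\lambda)^\top\tilde G_d(\textbf{x}-\bar{\textbf{x}})$ then cancels the one coming from $\alpha\dot V_1$. What remains is
\[-(\lambda-\bar\lambda)^\top L(\lambda-\bar\lambda) - \|\tilde G_d^\top(\lambda-\bar\lambda)\|^2 - (\lambda-\bar\lambda)^\top\tilde G_d(\textbf{F}_a(\textbf{x})-\textbf{F}_a(\bar{\textbf{x}})) + (\lambda-\bar\lambda)^\top(w-\bar w).\]
Since $L\bar\lambda = 0$, the first term equals $-\lambda^\top L\lambda$. Young's inequality bounds the third term by $\|\tilde G_d^\top(\lambda-\bar\lambda)\|^2 + \frac14\|\textbf{F}_a(\textbf{x})-\textbf{F}_a(\bar{\textbf{x}})\|^2$. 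Complementarity gives $(\lambda-\bar\lambda)^\top(w-\bar w) = -\lambda^\top\bar w - \bar\lambda^\top w \le 0$. Using the $\theta_2$-Lipschitz bound on $\textbf{F}_a$ from the preceding lemma, with $\theta_2 = \theta_1 + c\sigma_{\max}(\mathcal{L})$, we obtain
\[\dot V \le -\Bigl(\alpha\bar\mu - \tfrac{\theta_2^2}{4}\Bigr)\|\textbf{x}-\bar{\textbf{x}}\|^2 - \lambda^\top\bar w - \bar\lambda^\top w - \lambda^\top L\lambda \le 0\]
under the stated condition on $\alpha$.

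We conclude with LaSalle's invariance principle on a compact sublevel set of the radially unbounded $V$. The set $\{\dot V = 0\}$ forces $\textbf{x} = \bar{\textbf{x}}$, so $\textbf{x}(t)\to\bar{\textbf{x}}$ and hence $x = \mathcal{R}\textbf{x}\to\bar x$, the v-GNE, with $\mathcal{X}$ forward invariant. The main point needing care is that the equilibrium comparison point must be consensual, since only restricted strong monotonicity is available; \cref{lemma:partial-eq} supplies exactly this. We also need the uniqueness, and hence Lipschitzness, of the QP with $\tilde G_d$; it holds because $\tilde G_d\tilde G_d^\top = G_dG_d^\top$, so the argument of \cref{lemma-compact-lipschitz} carries over unchanged.
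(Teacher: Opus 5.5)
Your proposal is correct and follows the same route as the paper. It uses the same Lyapunov function $V$, applies restricted strong monotonicity with the consensual equilibrium $\bar{\textbf{x}}$ as comparison point, uses Young's inequality together with the $\theta_2$-Lipschitz bound on $\textbf{F}_a$, and concludes with LaSalle. Your substitution $\tilde G_d = G_d\mathcal{R}$, with $\tilde G_d\tilde G_d^\top = G_dG_d^\top$, simply writes out the reduction to the full-information proof that the paper invokes by analogy, and like the paper you implicitly require $c>c_{\min}$ so that $\bar\mu>0$.
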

\begin{proof}
    Consider Lyapunov function candidate
    \[V = \frac{\alpha}{2}\|\textbf{x} - \bar{\textbf{x}}\|^2 + \frac{1}{2}\|z-\bar{z}\|^2,\]
    and follow a similar procedure as in \cref{theorem-singleintegrator}, where the restricted monotonicity inequality \cref{restricted-monotonicity-inequality} is used whenever the strong monotonicity inequality is encountered, since the equilibrium $\bar{\textbf{x}}$ lies in the consensus subspace. Then we can obtain
    \[\dot{V}\leq -\left(\alpha\bar{\mu} - \frac{(\theta_1 + c\sigma_{\max}(\mathcal{L}))^2}{4}\right)\|\textbf{x} - \bar{\textbf{x}}\|^2 - \lambda^\top \bar{w} - \bar{\lambda}^\top w - (\lambda - \bar{\lambda})^\top L(\lambda - \bar{\lambda}),\]
    and $\dot{V}\leq 0$ if 
    \[\alpha> \frac{(\theta_1 + c\sigma_{\max}(\mathcal{L}))^2}{4\bar{\mu}}.\]
    By following a LaSalle invariance principle argument as in \cref{theorem-singleintegrator}, $\textbf{x}$ converges to $\bar{\textbf{x}}$ asymptotically, which implies that $x = \mathcal{R}\textbf{x}$ converges to $\bar{x}=\mathcal{R}\bar{\textbf{x}}$, i.e., the dynamics (\ref{dynamics-partial-stack}) solves the game problem (\ref{problem-formulation}) for v-GNE. Moreover, by \cref{lemma-partial-forwardinvariance}, we conclude that safe GNE seeking under partial-information setting is achieved.
\end{proof}
\begin{remark}
    We notice that the lower bound for the parameter $\alpha$ differs from the one in \cref{theorem-singleintegrator}. This arises from the introduction of consensus-based estimation dynamics, and consequently, the communication network, through the graph Laplacian, affects the conditioning of the augmented pseudo-gradient mapping and hence the sufficient lower bound on $\alpha$. To have a closer look at how the connectivity of the graph affects the lower bound of $\alpha$, we let $c = \frac{\kappa}{\sigma_2(\mathcal{L})}$ with $\kappa>\theta_1+\frac{(\theta+\theta_1)^2}{4\mu}$, so that $\Psi\succ 0$ is independent of the graph Laplacian, and the lower bound for $\alpha$ becomes $(\theta_1 + \kappa\frac{\sigma_{\max}(\mathcal{L})}{\sigma_2(\mathcal{L})})/4\bar{\mu}$, which is a function of only the spectral ratio $\frac{\sigma_{\max}(\mathcal{L})}{\sigma_2(\mathcal{L})}$. The spectral ratio is proportional to $\frac{\Delta(\mathcal{L})+1}{\delta(\mathcal{L})}$ by \cite{goldberg2006bounding}, where $\Delta(\mathcal{L})$ and $\delta(\mathcal{L})$ are the maximum and minimum degree of the graph, and therefore, the more densely and evenly the players are connected through the communication network, the lower the bound is for $\alpha$.
\end{remark}
\section{Equilibrium seeking with multi-integrator agents}\label{sec:multi-int}

In this section, we extend the CBF-based algorithm to GNE seeking problem with multi-integrators
\begin{align}\label{dynamics-multi-int}
    x_i^{(r)} = u_i,
\end{align}
where $x_i = x_i^{(0)}\in\mathbb{R}^{n_i}$ is the state of the agent and is also considered as the decision variable of the game, $u_i\in\mathbb{R}^{n_i}$ is the input and $r\in\mathbb{N}_+$ is the highest-order derivative of the agents. Practical examples of multi-integrator dynamics can be found in \cite{kant2026tracking,romero2024two}. We aim to design the input $u_i$ in a distributed way such that the decision variable $x_i$ reaches the v-GNE. Let $x = \col\{x_i\}_{i\in\mathcal{I}}$, $x^k = \col\{x_i^{(k)}\}_{i\in\mathcal{I}}$, $\forall k = \{1,2,\cdots,r-1\}$ and $u = \col\{u_i\}_{i\in\mathcal{I}}$, then the multi-integrator multi-agent system can be cast into the following compact form:
\begin{equation}\label{multi-int-compact}
    \dot{x} = x^1, \quad\dot{x}^1 = x^2,\quad\cdots,\quad\dot{x}^{r-1} = u.
\end{equation}
For GNE seeking problem with multi-integrator agents, besides the GNE characterization in \cref{problem1}, we require that at steady-state, the high-order derivatives of the decision variables are identically zero, i.e., $x_i^{(1)} = x_i^{(2)} = \cdots = x_i^{(r-1)} = 0$. Then the GNE of the game is a decision profile $x^{*}$, such that for all $i\in\mathcal{I}$,
\begin{equation}\label{problem-multi-int}
    \begin{aligned}
        x^*_i\in\argmin&\quad J_i\left(x_i+\sum_{k=1}^{r-1}\delta_i^k x_i^{(k)}, x_{-i}^*+\sum_{k=1}^{r-1}\delta_{-i}^kx_{-i}^{(k)}\right)\\
        \subjectto&\quad(x_i,x_{-i}^*)\in\mathcal{X},\quad x_i^{(1)} = x_i^{(2)} = \cdots = x_i^{(r-1)} = 0,
    \end{aligned}
\end{equation}
with appropriately sized matrices $\delta_i^k$ and $\delta_{-i}^k$. It is worth noticing that when $x_i^{(k)}=0$, $\forall k$, the above formulation reduces to Definition \ref{definition-gne}. Therefore, we design the algorithm with respect to the combination of $x_i$ and its higher-order derivatives, rather than $x_i$ alone, to solve the GNE seeking problem, while maintaining the evolution of the agents within the coupled constraint set.

\subsection{Coordinate transformation}
Building upon the new formulation \cref{problem-multi-int} of the GNE, we simplify the problem by adopting a coordinate transformation approach that transforms $x_i$ to a combination of $x_i$ and its higher-order derivatives. We perform the transformation from a CBF perspective, so that the multi-integrator reduces to a single integrator and the invariance of $\mathcal{X}$ translates to the invariance of new sets under the new coordinate, paving the way for the adaptation of dynamics \cref{dynamics}.

The key of implementing the CBF technique is to constrain the input of the control system within an admissible set defined as in (\ref{zcbf-soln}). Recalling \cref{multi-int-compact}, to guarantee the forward invariance of $\mathcal{X}$, by temporarily treating $x^1$ as an input, we can design the admissible set in terms of $x^1$ as follows:
\[K^1_{\alpha_1}(x) = \{x^1\in\mathbb{R}^n\mid G(x^1+\alpha_1 x)\leq \alpha_1 H\},\]
and introduce the following coordinate transformation:
\begin{align*}
    \left\{
  \begin{array}{r @{} l}
    \zeta^1     &= x + \frac{1}{\alpha_1}x^1, \\[0.5em]
    \tilde{u}^1 &= x^1 + \frac{1}{\alpha_1}x^2.
  \end{array}
\right.
\end{align*}
Notice that $K^1_{\alpha_1}(x)$ is equivalent to the set $\mathcal{Z}_1 = \{\zeta^1\in\mathbb{R}^n\mid G\zeta^1\leq H\}$. We carry on the same process with the following series of the coordinate transformations for $k = \{1,2,\cdots,r-1\}$, $\zeta^0 = x$, $\tilde{u}^0 = x^1$, and $\alpha_k>0$:
\begin{align}\label{coordinate-transform-multi-int}
    \left\{
  \begin{array}{r @{} l}
    \zeta^k     &= \zeta^{k-1} + \frac{1}{\alpha_k}\tilde{u}^{k-1}, \\[0.5em]
    \tilde{u}^k &= \frac{d}{dt}\zeta^k,
  \end{array}
\right.
\end{align}
where $\zeta^k = \col\{\zeta^k_i\}_{i\in\mathcal{I}}$ and $\tilde{u}^k = \col\{\tilde{u}^k_{i}\}_{i\in\mathcal{I}}$. Then we obtain a series of constraint sets $\mathcal{Z}_k$ on $\zeta^k$ and admissible sets $K^k_{\alpha_k}(\zeta^{k-1})$ for $\tilde{u}^{k-1}$ as follows:
\begin{equation}\label{constraint-new-coordinate}
    \begin{aligned}
        \mathcal{Z}_k &= \{\zeta^k\in\mathbb{R}^n|G\zeta^k\leq H\},\\
        K^k_{\alpha_k}(\zeta^{k-1}) &= \{\tilde{u}^{k-1}\in\mathbb{R}^n\mid G(\tilde{u}^{k-1}+\alpha_k\zeta^{k-1})\leq \alpha_kH\}.
    \end{aligned}
\end{equation}
Note that $\zeta^k$ is essentially a combination of the decision variable $x$ and its derivatives up to $k$th order, i.e., 
\begin{equation}\label{coordinate-transformation-zetak}
    \zeta^k = x + \sum_{j=1}^{k}\delta^jx^j,
\end{equation}
where $\delta^j$ are the descending coefficients, except the first one, of the polynomial $\prod_{\ell=1}^{k}(s + \frac{1}{\alpha_\ell})$.

\begin{remark}\label{remark-four-sets-relationships}
    It is worthwhile to clarify the relationships of the sets $\mathcal{X}$ and $K_{\alpha_k}^k$. Since $K_{\alpha_r}^{r}$ is the admissible set built upon the constraint set $\mathcal{Z}_{r-1}$, by Lemma \ref{lemma-zcbf}, any input $\tilde{u}^{r-1}\in K_{\alpha_r}^{r}$ would render $\zeta^{r-1}\in\mathcal{Z}_{r-1}$ for all forward time. Moreover, $\mathcal{Z}_{r-1}$ and $K_{\alpha_{r-1}}^{r-1}$ are equivalent through the coordinate transformation (\ref{coordinate-transform-multi-int}), meaning that $\tilde{u}^{r-1}\in K_{\alpha_r}^{r}$ would render the invariance of $K_{\alpha_{r-1}}^{r-1}$. By carrying forward such relationship, we conclude that any input $\tilde{u}^{r-1}\in K_{\alpha_r}^{r}$ would eventually lead to the forward invariance of the constraint set $\mathcal{X}$ (a schematic illustration is in Fig. \ref{fig:admissible-sets-relationships}).
    \begin{figure}[htbp]
    \centering
    \[\mathcal{X} \xLeftarrow[\text{invariance}]{\text{implies}}K_{\alpha_1}^1 \xLeftarrow[\text{invariance}]{\text{implies}} K_{\alpha_2}^2 \xLeftarrow[\text{invariance}]{\text{implies}}\cdots\xLeftarrow[\text{invariance}]{\text{implies}} K_{\alpha_r}^{r}\]
    \caption{The relationships of the admissible sets $K_{\alpha_k}^k$ and $\mathcal{X}$}
    \label{fig:admissible-sets-relationships}
\end{figure}
\end{remark}
\begin{remark}
    We notice that our method of coordinate transformation resembles the notion of high-order control barrier functions (HOCBF) in \cite{xiao2021high}, where the admissible control set is designed by constraining the Lie derivative of a scalar HOCBF at the order of the system's relative degree. In contrast, our method can be regarded as a vectorized generalization of the HOCBF. Moreover, as an extension of the single-agent control design proposed in HOCBF literature \cite{tan2021high,xiao2021high}, we introduce a distributed safe control design approach.

\end{remark}

\subsection{Distributed algorithm design}
For player $i\in\mathcal{I}$, consider the $(r-1)$th coordinate transformation, i.e., \cref{coordinate-transformation-zetak} when $k=r-1$, as follows:
\begin{align}\label{multi-integrator-coordinate-transformation}
    \left\{
  \begin{array}{r @{} l}
    \zeta^{r-1}_i     &= x_i + \sum_{j=1}^{r-1} \delta^j x^{(j)}_i, \\[1.2em]
    \tilde{u}^{r-1}_i &= x_i^{(1)} + \sum_{j=1}^{r-2} \delta^j x_i^{(j+1)} + \delta^{r-1} u_i.
  \end{array}
\right.
\end{align}
Then the multi-integrator \cref{dynamics-multi-int} in the new coordinates reads
\begin{subequations}
    \begin{align}
    \dot{\zeta}_i^{r-1} &= \tilde{u}_i^{r-1},\label{mul-integrator-zeta}\\
    \dot{x}_i^{d} &= A^{d}x_i^{d}+B^{d}\tilde{u}_i^{r-1},\\
    x_i &= \zeta_i^{r-1} -\Delta_i x_i^d,
    \end{align}
\end{subequations}
where $x_i^{d} = \col\{x_i^{(k)}\}_{k\in\{1,\cdots,r-1\}}$,
\[A^d = \begin{bmatrix}
    0&I_{n_i}&0&\cdots&0\\
    0&0&I_{n_i}&\cdots&0\\
    \vdots&\vdots&\vdots&\ddots&\vdots\\
    0&0&0&\cdots&I_{n_i}\\
    -\frac{1}{\delta^{r-1}}I_{n_i}&-\frac{\delta^1}{\delta^{r-1}}I_{n_i}&-\frac{\delta^2}{\delta^{r-1}}I_{n_i}&\cdots&-\frac{\delta^{r-2}}{\delta^{r-1}}I_{n_i}
\end{bmatrix},\]
$B^d = \begin{bmatrix}
    0&0&\cdots&0&\frac{1}{\delta^{r-1}}I_{n_i}
\end{bmatrix}^\top$ and $\Delta = \begin{bmatrix}
\delta^1 I_{n_i} & \delta^2 I_{n_i} & \cdots & \delta^{r-1}I_{n_i}
\end{bmatrix}$. Furthermore, the definition of the GNE in \cref{problem-multi-int} can be reformulated under the new coordinate as a collective decision profile $(\zeta^{r-1})^* = \col\{(\zeta_i^{r-1})^*\}_{i\in\mathcal{I}}$ such that for all $i\in\mathcal{I}$:
\begin{equation}\label{problem-reformulation-multi-int}
    \begin{aligned}
        \zeta_i^*\in\argmin&\quad J_i(\zeta_i^{r-1},(\zeta^{r-1}_{-i})^*)\\
        \subjectto&\quad (\zeta_i^{r-1},(\zeta_{-i}^{r-1})^*)\in\mathcal{Z}_{r-1}, \quad x_i^{(k)}=0,\;\forall k\in\{1,\cdots,r-1\},
    \end{aligned}
\end{equation}
where the constraint set $\mathcal{Z}_{r-1}$ is defined in \cref{constraint-new-coordinate}. Based on the reformulation of GNE in \cref{problem-reformulation-multi-int}, where the constraint set $\mathcal{Z}_{r-1}$ is identical to the original set $\mathcal{X}$ in both form and invariance properties (by \cref{remark-four-sets-relationships}), and notice that dynamics (\ref{mul-integrator-zeta}) is a single integrator, then we use algorithm (\ref{dynamics}) for (\ref{mul-integrator-zeta}), and obtain the following closed-loop system:
\begin{subequations}\label{algorithm-muti-int}
    \begin{align}
        \dot{\zeta}^{r-1}_i &= \tilde{u}^{r-1}_i,\label{algorithm-multi-int-zeta}\\
        \tilde{u}^{r-1}_i &= -\nabla_{x_i} J_i(\zeta^{r-1}_i,\zeta^{r-1}_{-i}) - g_i^\top\lambda_i,\\
        \dot{z}_i &= \sum_{j\in\mathcal{N}_i}a_{ij}(\lambda_i-\lambda_j),\\
        \lambda_i &= \begin{aligned}[t]
            \argmin_{\lambda_i\geq0}\;&\frac{1}{2}\left\|g_i^\top \lambda_i\right\|^2+\lambda_i^\top\Biggl[g_i\nabla_{x_i} J_i(\zeta^{r-1}_i,\zeta^{r-1}_{-i}) -\alpha_r (g_i\zeta^{r-1}_i-h_i)\\
            &\left.+\sum_{j\in\mathcal{N}_i}a_{ij}(z_i-z_j)+\frac{1}{2}\sum_{j\in\mathcal{N}_i}a_{ij}(\lambda_i-2\lambda_j)\right],
        \end{aligned}\label{algorithm-multi-int-lambda}\\
        \dot{x}_i^{d} &= A^{d}x_i^{d}+B^{d}\tilde{u}_i^{r-1},\\
        x_i &= \zeta_i^{r-1} -\Delta_i x_i^d.
    \end{align}
\end{subequations}
Let $\tilde{u}^{r-1} = \col\{\tilde{u}^{r-1}_i\}_{i\in\mathcal{I}}$, $x^d = \col\{x_i^d\}_{i\in\mathcal{I}}$ and $\Delta = \diag\{\Delta_i\}_{i\in\mathcal{I}}$. Then we rewrite the above closed-loop system in the following compact form:
\begin{subequations}\label{multi-integrator-dynamics-clsdlp-stacked}
    \begin{align}
        \dot{\zeta}^{r-1} =& \tilde{u}^{r-1},\label{multi-integrator-dynamics-zeta}\\
        \tilde{u}^{r-1} =& -F(\zeta^{r-1}) - G_d^\top\lambda,\\
        \dot{z} =& L\lambda,\\
        \lambda =& \begin{aligned}[t]
            \argmin_{\lambda\geq0}\;&\frac{1}{2}\|G_d^\top \lambda\|^2+\lambda^\top\left[G_dF(\zeta^{r-1})\right.\\&-\alpha_r(G_d \zeta^{r-1}-H_s)+Lz]+\frac{1}{2}\lambda^\top L\lambda,
        \end{aligned}
        \label{multi-integrator-lambda}\\
        \dot{x}^d =& (I_N\otimes A^d)x^d + (I_N\otimes B^d)\tilde{u}^{r-1},\label{multi-integrator-dynamic-xd}\\
        x =& \; \zeta - \Delta\cdot x^d.
    \end{align}
\end{subequations}
\begin{remark}
    In the case of $r=1$, i.e., the players are single integrators, the coordinate transformation is merely $\zeta^{0} = x$, dynamics \cref{multi-integrator-dynamic-xd} is removed, and \cref{multi-integrator-dynamics-clsdlp-stacked} reduces to \cref{dynamics-stack}. The dynamics for $r=2$ was presented in \cite{Meng2026}, and can also be regarded as a special case of \cref{multi-integrator-dynamics-clsdlp-stacked}.
\end{remark}

\subsection{Convergence analysis}
Considering the coordinate transformation we applied, as well as the fact that the equivalence of the two formulations of the GNE under two coordinates is based on the vanishment of $x^{(k)}_i$, the design objectives in \cref{remark-three-objectives} can be broken down into the following:
\begin{enumerate}[label=(O\arabic*$''$)]
    \item The constraint set $\mathcal{Z}_{r-1}$ is forward invariant;\label{objective-multi-int-sf}
    \item The equilibrium of (\ref{multi-integrator-dynamics-zeta}) is a v-GNE defined by (\ref{problem-reformulation-multi-int});\label{objective-multi-int-eq}
    \item The single-integrator (\ref{multi-integrator-dynamics-zeta}) asymptotically converges to its equilibrium and the sub-system (\ref{multi-integrator-dynamic-xd}) is asymptotically stable.\label{objective-multi-int-as}
\end{enumerate}
The following theorem shows the convergence of the proposed algorithm:
\begin{theorem}
    Consider dynamics (\ref{multi-integrator-dynamics-clsdlp-stacked}). Under Assumption \ref{assumption-obj-and-constraint}, \ref{assumption-pseudo-grad-mapping}, and \ref{assumption-network}, for any feasible initial condition $x(0)\in\mathcal{X}$, the following hold:
    \begin{enumerate}[label=(\roman*)]
        \item the variable $\zeta^{r-1}$ asymptotically converges to the v-GNE defined by (\ref{problem-reformulation-multi-int}) with $\mathcal{Z}_{r-1}$ being forward invariant, provided $\alpha_r>\frac{\theta^2}{4\mu}$;
        \item the dynamics (\ref{multi-integrator-dynamic-xd}) is input-to-state stable (ISS) with respect to the input $\tilde{u}^{r-1}$, provided $\alpha_k>0$, for all $k \in \{1,2,\cdots,r-1\}$.
    \end{enumerate}
    Hence, (\ref{multi-integrator-dynamics-clsdlp-stacked}) solves the game problem (\ref{definition-gne}) for the v-GNE with forward invariance of the constraint set $\mathcal{X}$.
\end{theorem}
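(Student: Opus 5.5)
The proof splits along the two items of the theorem. The $(\zeta^{r-1},z)$ subsystem is autonomous, and the $x^d$ dynamics is a linear filter driven by its output.

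\emph{Item (i).} Equations \eqref{multi-integrator-dynamics-zeta}--\eqref{multi-integrator-lambda} do not involve $x^d$. Under the substitutions $x\mapsto\zeta^{r-1}$ and $\alpha\mapsto\alpha_r$, they coincide exactly with the single-integrator dynamics \eqref{dynamics-stack}. So I would invoke \cref{lemma-compact-lipschitz} for local Lipschitzness and uniqueness of the synthesized $\lambda=\Lambda(\zeta^{r-1},z)$. \cref{lemma-forwardinvariance} then gives forward invariance of $\mathcal{Z}_{r-1}$. \cref{lemma-vgneeq} shows the equilibrium $\bar\zeta^{r-1}$ solves \eqref{KKT-vGNE} with consensual $\bar\lambda$. \cref{theorem-singleintegrator}, with the Lyapunov function $\alpha_r\frac12\|\zeta^{r-1}-\bar\zeta^{r-1}\|^2+\frac12\|z-\bar z\|^2$, gives convergence whenever $\alpha_r>\theta^2/(4\mu)$.

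One point needs care: the initial condition $\zeta^{r-1}(0)\in\mathcal{Z}_{r-1}$ rather than merely $x(0)\in\mathcal{X}$. I would state that feasibility is understood in the sense of \cref{remark-four-sets-relationships}, i.e.\ $\zeta^k(0)\in\mathcal{Z}_k$ for all $k$. The simplest case is initial derivatives zero, which gives $\zeta^k(0)=x(0)$.

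To pass invariance back down to $\mathcal{X}$, I would use the chain of \cref{remark-four-sets-relationships} by induction on $k$ from $r-1$ down to $0$. Since $\zeta^{k}=\zeta^{k-1}+\frac1{\alpha_k}\dot\zeta^{k-1}$, the scalar functions $y_k=G\zeta^{k}-H$ satisfy $\dot y_{k-1}=\alpha_k(y_k-y_{k-1})$. Hence $y_k\le0$ for all time together with $y_{k-1}(0)\le0$ gives $y_{k-1}(t)\le0$, by the comparison lemma or the variation-of-constants formula $y_{k-1}(t)=e^{-\alpha_k t}y_{k-1}(0)+\alpha_k\int_0^t e^{-\alpha_k(t-s)}y_k(s)\,ds\le 0$. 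This yields invariance of every $\mathcal{Z}_k$, and in particular of $\mathcal{X}=\mathcal{Z}_0$.

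\emph{Item (ii).} The matrix $A^d$ is the block companion matrix associated with the polynomial $\delta^{r-1}s^{r-1}+\dots+\delta^1 s+1$. Up to scaling, this is $\prod_{\ell=1}^{r-1}(s+\alpha_\ell)$, because the coefficients $\delta^j$ come from $\prod_\ell(s+\frac1{\alpha_\ell})$ with reversed/normalized ordering. Its roots $-\alpha_\ell$ are negative whenever all $\alpha_\ell>0$. So I would verify the characteristic polynomial of $A^d$, conclude that $A^d$ and hence $I_N\otimes A^d$ is Hurwitz, and invoke the standard fact that a Hurwitz LTI system $\dot x^d=Ax^d+B\tilde u$ is ISS. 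Matching the $\delta^j$ to the roots is the main bookkeeping obstacle. As a fallback, I would avoid coefficient matching and argue directly from the cascade $\dot\zeta^{k-1}=\alpha_k(\zeta^k-\zeta^{k-1})$, whose blocks are each stable first-order filters.

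\emph{Conclusion.} By item (i), $\tilde u^{r-1}=-F(\zeta^{r-1})-G_d^\top\lambda$ converges to $-F(\bar\zeta)-G_d^\top\bar\lambda=0$, using continuity of $\Lambda$. ISS with a converging-to-zero input then gives $x^d\to0$, so all derivatives $x_i^{(k)}$ vanish. Therefore $x=\zeta^{r-1}-\Delta x^d\to\bar\zeta^{r-1}$. At that point \eqref{problem-reformulation-multi-int} reduces to Definition~\ref{definition-gne}, so $x$ converges to the v-GNE while $\mathcal{X}$ remains forward invariant.
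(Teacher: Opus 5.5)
Your proposal is correct and follows the paper's proof. Part (i) comes from identifying \eqref{multi-integrator-dynamics-zeta}--\eqref{multi-integrator-lambda} with \eqref{dynamics-stack} (with $\alpha=\alpha_r$), part (ii) from $A^d$ being Hurwitz with eigenvalues $-\alpha_1,\dots,-\alpha_{r-1}$, and the conclusion from $\tilde u^{r-1}\to 0$, $x^d\to 0$ and the invariance chain of \cref{remark-four-sets-relationships}. Two of your additions make precise what the paper's proof uses only implicitly: the explicit comparison argument via $\dot y_{k-1}=\alpha_k(y_k-y_{k-1})$, and the observation that the invariance argument needs $\zeta^k(0)\in\mathcal{Z}_k$ for all $k$ (e.g.\ zero initial derivatives) rather than merely $x(0)\in\mathcal{X}$.
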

\begin{proof}
    (i) As dynamics (\ref{multi-integrator-dynamics-zeta})--(\ref{multi-integrator-lambda}) are formally identical to dynamics \cref{dynamics-stack} with $x$ and $\mathcal{X}$ replaced by $\zeta^{r-1}$ and $\mathcal{Z}_{r-1}$, respectively. Then, by arguments similar to \cref{lemma-forwardinvariance}, \cref{lemma-vgneeq} and \cref{theorem-singleintegrator}, but applicable to dynamics (\ref{multi-integrator-dynamics-zeta})--(\ref{multi-integrator-lambda}), we have that if $\alpha_r>\frac{\theta^2}{4\mu}$, $\zeta^{r-1}$ asymptotically converges to the v-GNE defined by (\ref{problem-reformulation-multi-int}) with $\mathcal{Z}_{r-1}$ being forward invariant, thereby objectives \ref{objective-multi-int-sf} and \ref{objective-multi-int-eq} are satisfied;

    (ii) Since $\alpha_k>0$, for all $k \in \{1,2,\cdots,r-1\}$, $A^d$ is Hurwitz with the eigenvalues $\{-\alpha_1,-\alpha_2,\cdots,-\alpha_{r-1}\}$, and $\tilde{u}^{r-1}$ approaches to $0$ asymptotically. Then (\ref{multi-integrator-dynamic-xd}) is ISS and $x^d$ converges to $0$ asymptotically.

    Given (i) and (ii), objective \ref{objective-multi-int-as} is satisfied. Since the GNE defined by (\ref{problem-reformulation-multi-int}) and (\ref{problem1}) are identical when $x_i^{(k)}=0$, for all $k\in\{1,2,\cdots,r-1\}$, and $x^d(t)$ is vanishing by (ii), the equilibrium $\bar{x}$ is the v-GNE of the game problem (\ref{definition-gne}). Moreover, by \cref{remark-four-sets-relationships}, the forward invariance of $\mathcal{Z}_{r-1}$ implies the forward invariance of $\mathcal{X}$. Therefore, (\ref{multi-integrator-dynamics-clsdlp-stacked}) solves the game problem (\ref{definition-gne}) for the v-GNE with forward invariance of the constraint set $\mathcal{X}$. 
\end{proof}
\begin{remark}
    We note that distributed safe GNE seeking for multi-integrator agents under partial-information setting can be similarly carried out by keeping an estimate for variable $\zeta^{r-1}_{-i}$ as considered in \cite{bianchi2021}. Since the coordinate transformation reduces the multi-integrator dynamics into single-integrator form, the design from \Cref{sec:partialinfo} can be applied directly by augmenting $\zeta_i^{r-1}$ with the estimate for $\zeta_{-i}^{r-1}$. The convergence analysis of the algorithm follows directly from \Cref{sec:partialinfo,sec:multi-int} and is thus omitted for brevity.
\end{remark}
\section{Simulations}\label{sec:sim}

In this section, we apply the algorithm designed for single integrators and double integrators to two practical examples. The first example compares the CBF-based algorithm with a traditional primal-dual method and an interior-point method (IPM) to highlight the advantage of the designed algorithm. The second example verifies the applicability of the algorithm designed for multi-integrator agents.

\subsection{Networked Cournot Game}
We consider a Cournot game problem where $N=4$ companies compete in $m=3$ markets. Company $i\in\mathcal{I} = \{1,2,3,4\}$ provides $x_i\in\mathbb{R}^{n_i}$ amount of products to the market it participates in, and matrix $A_i\in\mathbb{R}^{m\times n_i}$ specifies the participation of company $i$ in market $j$ by letting $[A_i]_{jk} = 1$ if and only if company $i$ provides product $k$ to market $j$, and $[A_i]_{jk} = 0$ otherwise. Let $A = [A_1,\cdots,A_N]$ and $x = \col\{x_i\}_{i\in\mathcal{I}}$. Then $Ax$ is a vector whose entries document the amount of products provided to the corresponding markets. The goal of the companies is to minimize their own objectives, respectively, while ensuring that the total amounts of the products they provide do not exceed the capacities of the markets, which is described by a vector $r\in\mathbb{R}^3$ and can be separated as $r = \sum_{i=1}^{N}r_i$. The problem is then formulated as follows:
\begin{equation}\label{sim1-problem}
    \begin{aligned}
        \min_{x_i\in\mathbb{R}^{n_i}}&\quad c_i(x_i) - P^\top(Ax)A_ix_i\\
        \subjectto&\quad \sum_{i=1}^{4}A_i x_i\leq \sum_{i=1}^{4}r_i,
    \end{aligned}
\end{equation}
where $c_i = x_i^\top Q_i x_i + q_i^\top x_i$ is the cost, $P(Ax) = \bar{P} - \Xi Ax$ is the price, $\bar{P}\in\mathbb{R}^m$ and $\Xi \in\mathbb{R}^{m\times m}$. We choose the parameters (or the entries of the parameters) $Q_i,q_i,\bar{P},\Xi,r_i$ and $A_i$ randomly from the intervals $[1,4],(0,1),[100,150],(0,3),[10,20]$ and the set $\{0,1\}$, such that $Q_i\succ 0$ and $\Xi\succ 0$. Moreover, the 4 companies are allowed to exchange their dual variables through a connected undirected communication network described by Fig. \ref{fig:communication}. In order to indicate the forward invariance of the constraint set, we define $g(x) = Ax-r = \col\{g_i(x)\}_{i\in\mathcal{I}}$, and intuitively, $\max\{g_i(x)\}_{i\in\mathcal{I}}\leq0$ for all time means that the forward invariance is ensured.
\begin{figure}
    \centering
    \begin{tikzpicture}[node distance=2cm]
    \node[circle, draw] (A) {1};
    \node[circle, draw, right of=A] (B) {2};
    \node[circle, draw, below of=A] (C) {3};
    \node[circle, draw, below of=B] (D) {4};

    \draw (A) -- (B);
    \draw (A) -- (C);
    \draw (B) -- (D);
    \draw (C) -- (D);
    \end{tikzpicture}
    \caption{Communication network}
    \label{fig:communication}
\end{figure}

\textit{A comparison with primal-dual method}: we implement the full information CBF-based algorithm (\ref{dynamics}), its partial-information counterpart (\ref{dynamics-partial-info}) and a traditional primal-dual algorithm (e.g. (5) in \cite{weijian2024}) for problem (\ref{sim1-problem}). The simulation results are presented in Fig. \ref{fig:sim1}: the left plot shows that all three algorithms are able to seek the v-GNE of problem (\ref{sim1-problem}); however, the right plot, where the shaded area indicates the infeasible region, i.e., $\exists\; i$ such that $g_i(x)>0$, shows that at the very beginning of the simulation, at least one decision variable by the primal-dual method jumps outside of the constraint before gradually converges back, while the decision variables by the two CBF-based algorithms are kept within the constraint set for all time.
\begin{figure}[htbp]
    \centering
    \includegraphics[width = 8cm]{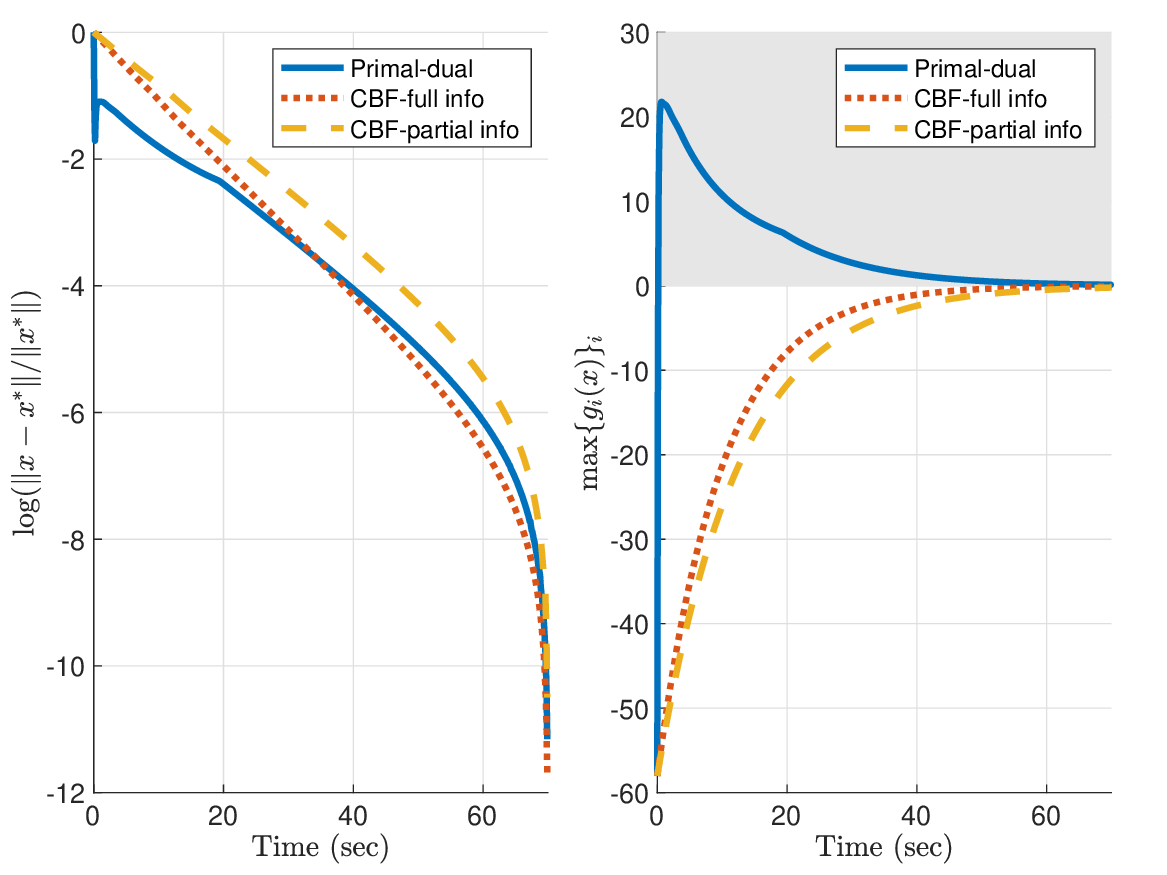}
    \caption{The trajectories of $\log(\|x-x^*\|/\|x^*\|)$ and $\max\{g_i(x)\}$ by the CBF-based algorithm and the primal-dual method.}
    \label{fig:sim1}
\end{figure}

\textit{A comparison with interior-point method}: we implement an interior-point algorithm with time-varying penalty parameter (e.g. (12) in \cite{romano2022}) for problem (\ref{sim1-problem}), and compare the results with the CBF-based algorithm. The simulation results are presented in Fig. \ref{fig:sim1.1}. For illustration, we initialize both algorithms at the exact v-GNE of the game. As shown in the plot, the trajectories by the interior-point algorithm transiently diverge from the exact v-GNE, due to the log-barrier penalty on the objectives, but they eventually return to the v-GNE, when the parameter of the penalty is tuned to the extent that the barrier becomes negligible. In contrast, the trajectories by the CBF-based algorithm stay at the v-GNE for all time.
\begin{figure}[htbp]
    \centering
    \includegraphics[width = 10cm]{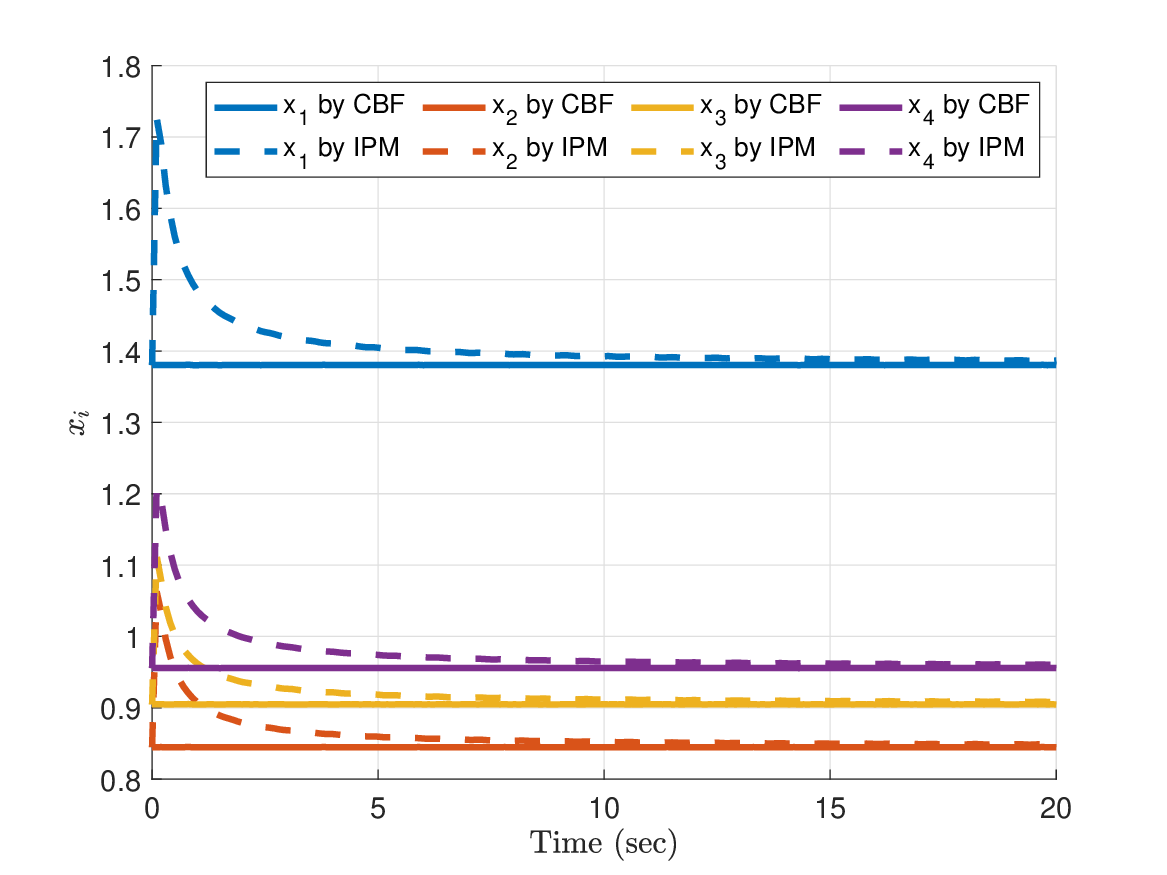}
    \caption{The trajectories of $x_i$ by the CBF-based algorithm and IPM.}
    \label{fig:sim1.1}
\end{figure}

\subsection{Rate Control over Wireless ad hoc Networks}
We consider an extremely simple wireless ad hoc network with 3 nodes and 2 links as in Fig. \ref{fig:adhoc}, and there are 4 users trying to transfer data through this network. The vector $G_i\in\mathbb{R}^{2}$ specifies the links user $i$ uses by letting $[G_i]_j = 1$ if and only if user $i$ uses link $j$ and $[G_i]_j=0$ otherwise. If user $i$ uses link $j$, then the user needs to decide its data rate $x_i\in\mathbb{R}$ by solving the following problem:
\begin{equation}\label{sim2-problem}
    \begin{aligned}
        \min_{x_i\in\mathbb{R}}&\quad -\chi_i\log(x_i+1)+D^\top(x)G_ix_i\\
        \subjectto&\quad Gx\leq H,
    \end{aligned}
\end{equation}
where $G = [G_1,G_2,G_3,G_4]\in\mathbb{R}^{2\times 4}$, $H = \col\{H_1,H_2\}\in\mathbb{R}^2$ stores the capacities of the two links and can be decoupled by $H = \sum_{i=1}^{4}h_i$, and $D(x) = \col\{d_1(x),d_2(x)\}\in\mathbb{R}^2$ with
\[d_j = \frac{k_j}{H_j - [Gx]_j + \xi_j}\]
describes the delays of the links. The process of solving for problem (\ref{sim2-problem}) is controlled by a double integrator, i.e., (\ref{dynamics-multi-int}) when $r=2$, and we denote the derivative of the state as $v_i$, i.e., $\dot{x}_i = v_i$. We mention that it is practical to implement the algorithm for double-integrator agents, since the first order-derivative can be interpreted as a measurement for the link backlogs \cite{Tian2005,Paganini2002}. The parameters (or the entries of the parameters) $H_j,\chi_i,k_j$ and $\xi_j$ are randomly drawn from the intervals $[10,16],[10,20],[10,30]$ and $[20,40]$. Moreover, the users are able to exchange their dual variables through the communication network in Fig. \ref{fig:communication}.
\begin{figure}
    \centering
    \begin{tikzpicture}[node distance=2cm]
    \node[circle, draw] (A) {1};
    \node[circle, draw, right of=A] (B) {2};
    \node[circle, draw, right of=B] (C) {3};

    \draw (A) -- node[above] {Link 1}(B);
    \draw (B) -- node[above] {Link 2}(C);
    \end{tikzpicture}
    \caption{Wireless ad hoc network}
    \label{fig:adhoc}
\end{figure}

We implement the full-information algorithm \cref{algorithm-muti-int} and its partial-information counterpart to solve for problem (\ref{sim2-problem}). The simulation results are presented in Fig. \ref{fig:sim2} and \ref{fig:sim2-partial}. In both Fig. \ref{fig:full-a} and \ref{fig:partial-a}, we can see from the left plots that the data rates of the users eventually reach a steady-state,  from the right upper plots that the derivatives of the decision variables eventually vanish and from the right lower plots that the forward invariance of the feasible set is ensured. In both Fig. \ref{fig:full-b} and \ref{fig:partial-b}, we can see from the two upper plots that the dual variables of the users eventually converge to the same value, and from the lower plot that the complementary slackness is eventually satisfied, which verifies that the steady-state is the v-GNE of the problem. Moreover, each user's estimate of the collective vector $\zeta$ is plotted in Fig. \ref{fig:estimation}, which demonstrates that the estimates eventually reach a consensus, meaning that accurate estimation is achieved upon convergence.
\begin{figure}[htbp]
    \centering
    \begin{subfigure}[b]{0.45\textwidth}
        \centering
        \includegraphics[width = 5.8cm]{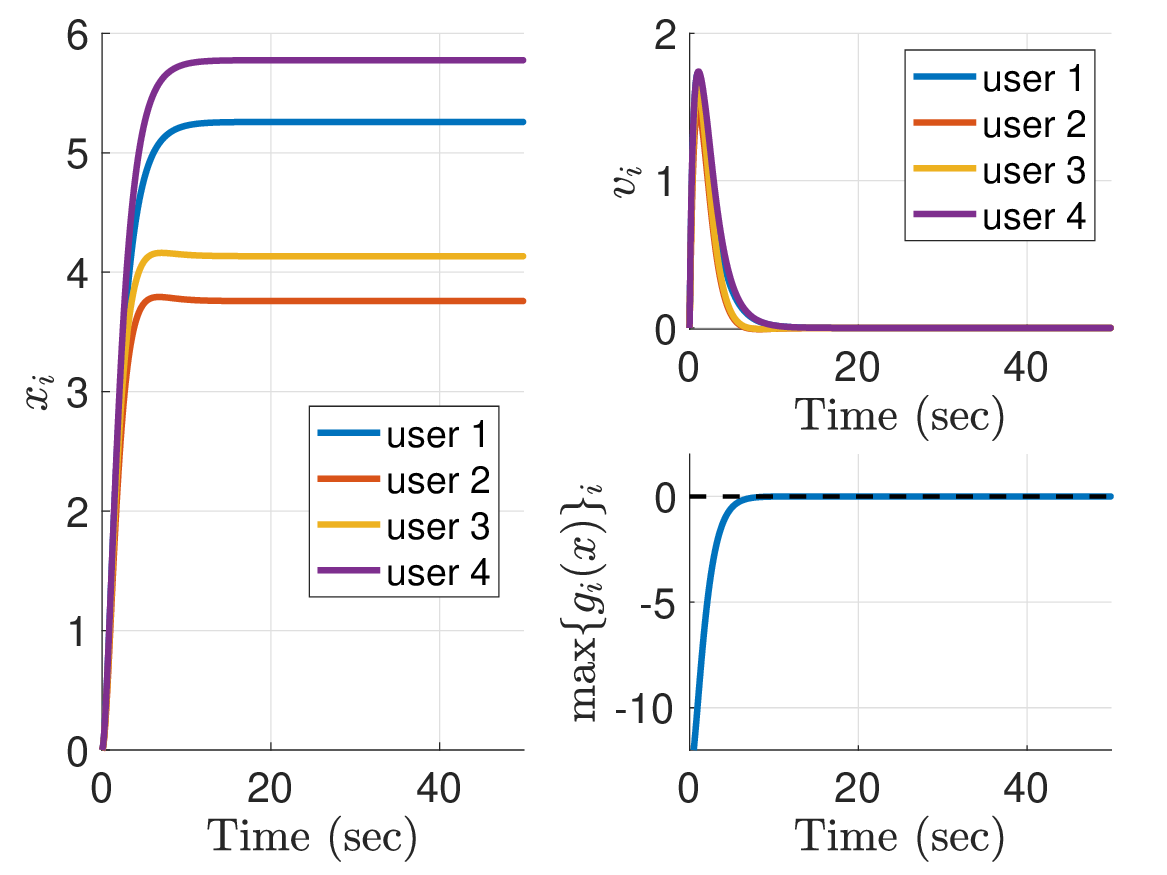}
        \caption{}
        \label{fig:full-a}
    \end{subfigure}
    \begin{subfigure}[b]{0.45\textwidth}
        \centering
        \includegraphics[width = 5.8cm]{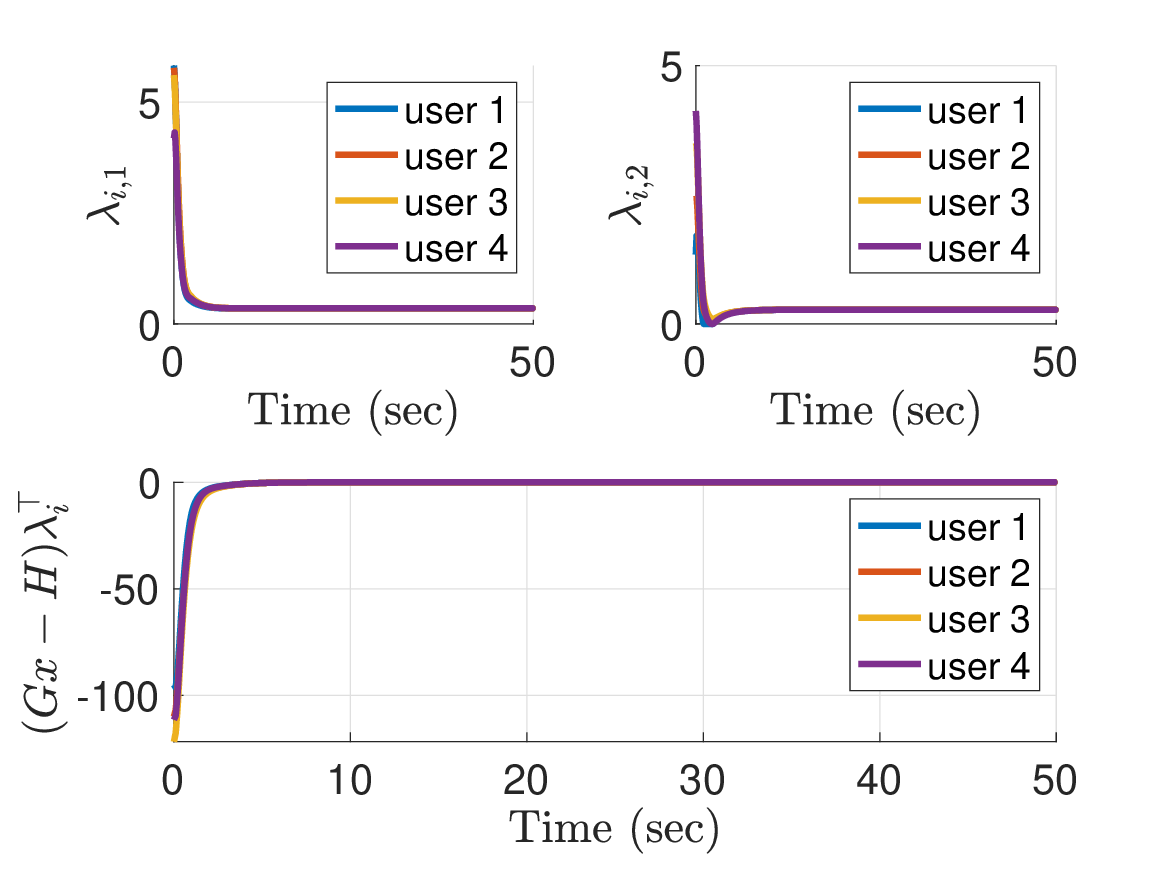}
        \caption{}
        \label{fig:full-b}
    \end{subfigure}
    
    \caption{Results under full-information setting: (a) The trajectories of $x_i$ , $v_i$ and $\max\{g_i(x)\}$. (b) The trajectories of the dual variables $\lambda_i$ and $(Gx-H)\lambda_i^\top$.}
    \label{fig:sim2}
\end{figure}
\begin{figure}[htbp]
    \centering
    \begin{subfigure}[b]{0.45\textwidth}
        \centering
        \includegraphics[width = 5.8cm]{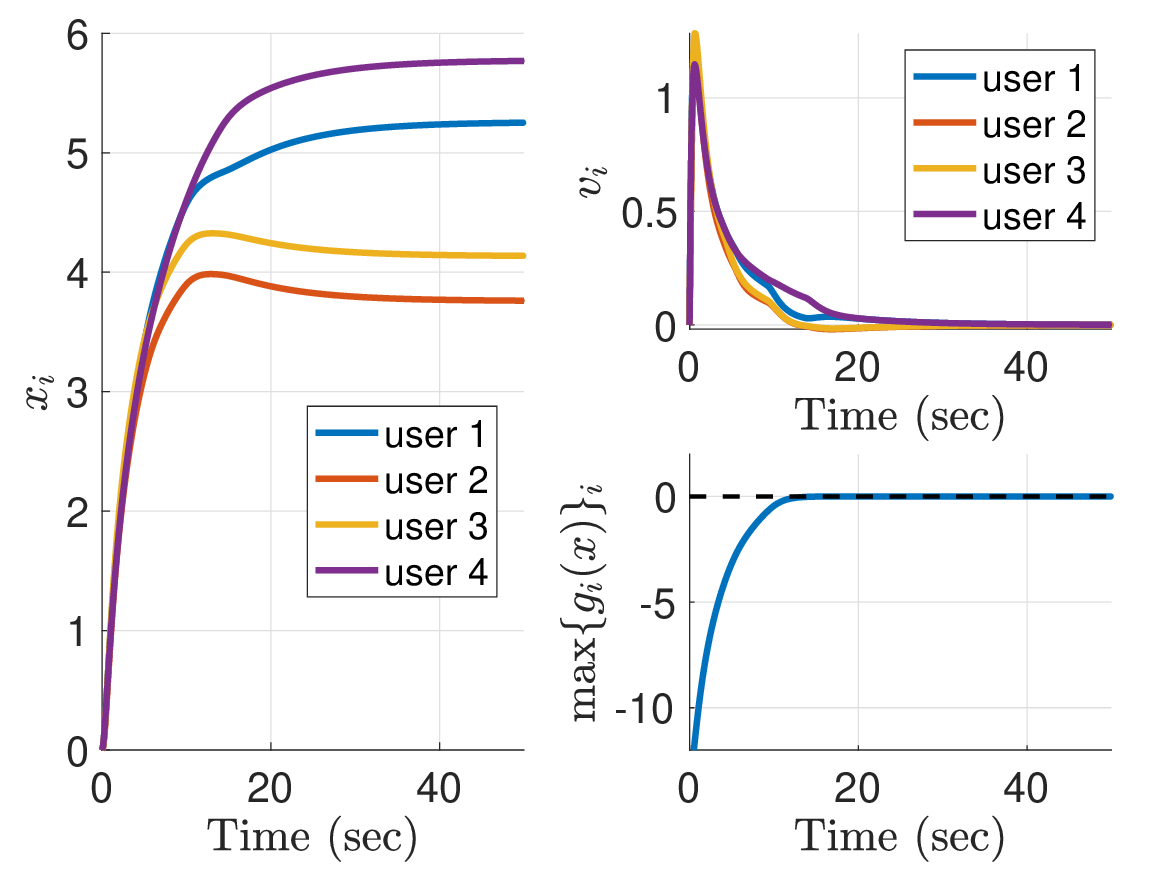}
        \caption{}
        \label{fig:partial-a}
    \end{subfigure}
    \begin{subfigure}[b]{0.45\textwidth}
        \centering
        \includegraphics[width = 5.8cm]{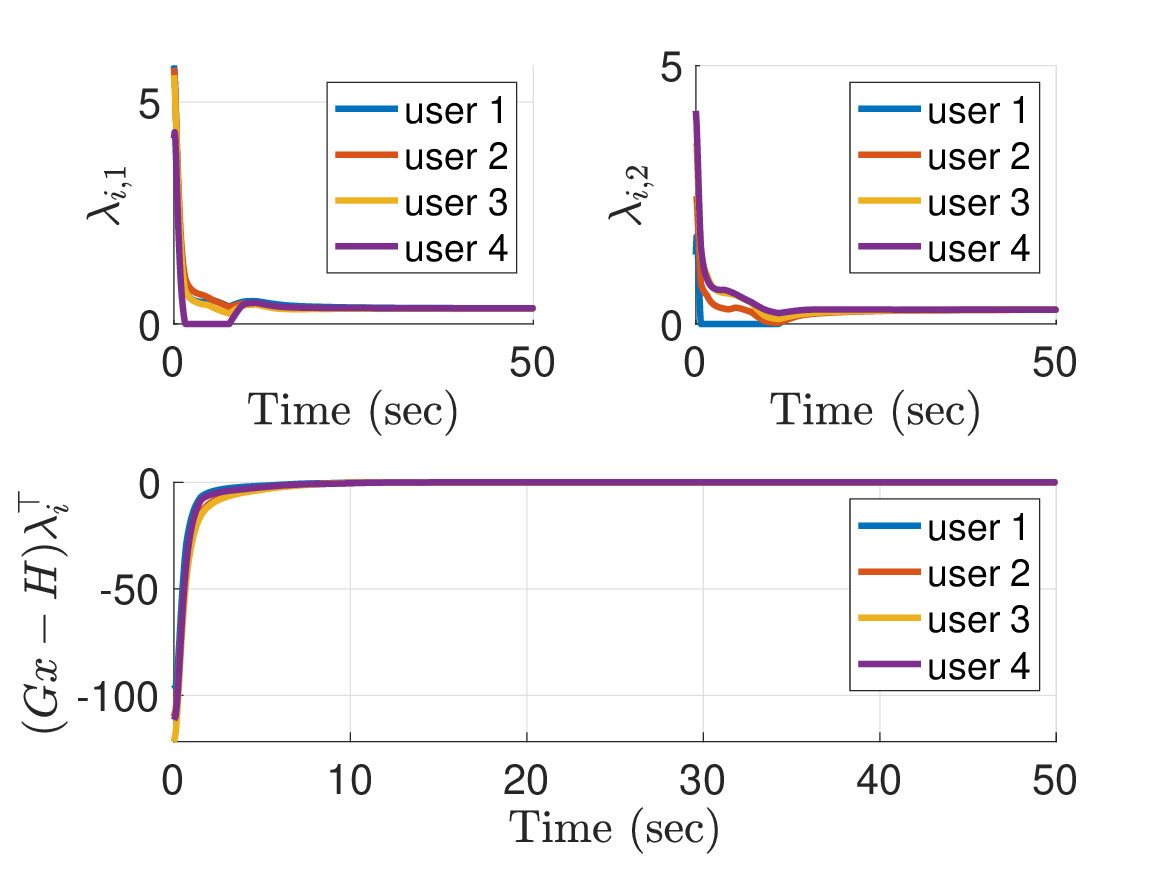}
        \caption{}
        \label{fig:partial-b}
    \end{subfigure}
    
    \caption{Results under partial-information setting: (a) The trajectories of $x_i$ , $v_i$ and $\max\{g_i(x)\}$. (b) The trajectories of the dual variables $\lambda_i$ and $(Gx-H)\lambda_i^\top$.}
    \label{fig:sim2-partial}
\end{figure}
\begin{figure}
    \centering
    \includegraphics[width = 7.5cm]{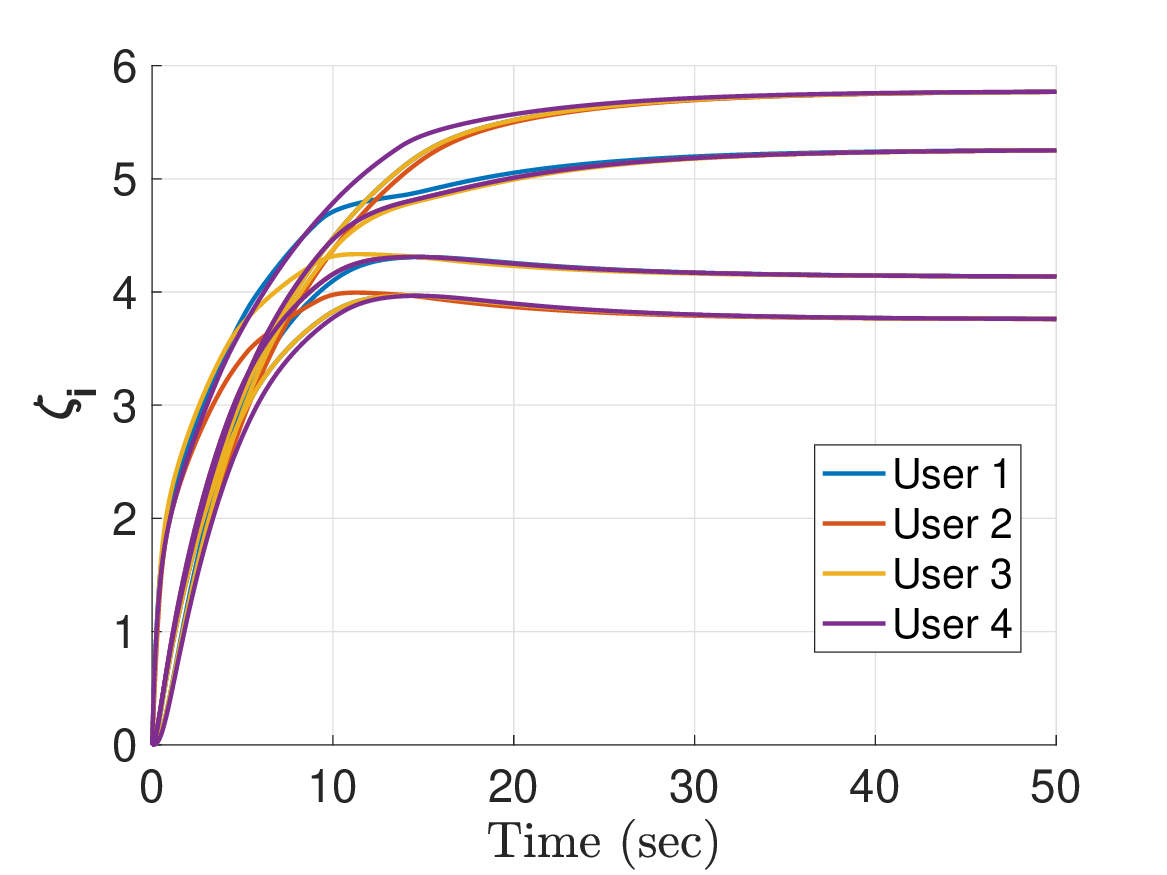}
    \caption{Augmented variable $\boldsymbol{\zeta}_i$ of every user.}
    \label{fig:estimation}
\end{figure}
\section{Conclusion}\label{sec:conclusion}
In this paper, we considered GNE seeking in non-cooperative games with coupled constraint sets. To enforce safety for distributed GNE seeking, we introduced a CBF implementation for the problem. We showed that with such implementation, under both full- and partial-information setting, the trajectories of the seeking dynamics stay within the constraint set for all time, the equilibrium of the dynamics is the exact v-GNE of the game and the dynamics asymptotically converge to the v-GNE. Furthermore, we extended the approach to games where the players are multi-integrator agents by utilizing a CBF-based coordinate transformation.

There are a number of open problems following from this work. While this paper considers affine coupled constraints, whether the CBF-based algorithm is also applicable for more general nonlinear constraints needs to be further investigated. The current work is based on a continuous-time setting, and how to introduce the methodology in a discrete-time framework remains an open question. How to generalize the algorithm design to solve for GNE seeking problems with general linear or nonlinear players is also a direction for future research. 

\bibliographystyle{siamplain}
\bibliography{references}
\end{document}